\title{Chen-Gackstatter type surfaces in $\mathbb{R}_{1}^{4}$: \\ deformation, symmetry, and embeddedness}
\author{Zhenxiao  Xie   \and  Xiang Ma }
\newtheorem{theorem}{Theorem}[section]
\newtheorem{proposition}[theorem]{Proposition}
\newtheorem{definition}[theorem]{Definition}
\newtheorem{lemma}[theorem]{Lemma}
\theoremstyle{remark}
\newtheorem{remark}[theorem]{Remark}
\begin{document}
\maketitle
\begin{abstract}
We find a 2-parameter family of deformations in $\mathbb{R}^4_1$
of the classical Chen-Gackstatter surface explicitly,
and show the existence of a larger 4-parameter family of deformations.
Each of them still has genus one, a unique end, with total Gaussian curvature $-\int K=8\pi$.
On the other hand, a uniqueness theorem is obtained when we assume that the surface has more than $4$ symmetries.
The problem of embeddedness is also discussed with some partial results.\\
\end{abstract}
\tableofcontents
\section{Introduction}
\label{intro}
\indent
The existence and uniqueness problems of complete
minimal surfaces in $\mathbb{R}^{3}$ under global assumptions
have always been an important problem.
One example is Lopez-Ros theorem \cite{Lopez-Ros},
which says that a complete embedded minimal surface in $\mathbb{R}^{3}$ of genus zero with finite total curvature
must be a plane or a catenoid.
Obviously, embeddedness puts strong restriction on the
global behavior of minimal surfaces in $\mathbb{R}^{3}$.
Thanks to the work of Colding, Minicozzi, Meeks, Rosenberg
and other people, we now have a much better understanding
on related problems (see the survey in \cite{Meeks-Perez}).

In the 4-dimensional Lorentz space, the situation changes dramatically.
By the generalized Weierstrass representation formula
for stationary surfaces (i.e., zero mean curvature
spacelike surfaces) in $\mathbb{R}_{1}^{4}$,
in \cite{Liu-Ma-Wang-Wang-2012} we constructed the
generalized $k$-noids, some stationary graphs over (domains of)
$\mathbb{R}^{2}$, as well as the generalized Enneper surfaces,
which could all be embedded in $\mathbb{R}_{1}^{4}$
(with genus zero and finite total Gaussian curvature).
These examples refute the Lopez-Ros theorem, and also
violates the conclusions of Schoen's famous characterization of
the catenoid \cite{Schoen} as well as Meeks and Rosenberg's
uniqueness theorem about the helicoid \cite{Meeks-Rosenberg}.
In particular, all previous uniqueness and non-existence results
in $\mathbb{R}^{3}$ need to be re-examined in this new context.

In this paper we start to consider complete, algebraic, stationary surfaces of genus one in $\mathbb{R}_{1}^{4}$.

Recall that the first of such examples in $\mathbb{R}^3$ is
the Chen-Gackstatter surface \cite{Chen-Gackstatter-1981}
which has a unique Enneper-type end (hence not embedded)
and least total curvature $-\int K\mathrm{d}M=8\pi$.
According to Lopez's uniqueness result \cite{Lopez-1992},
this is the only minimal surface with genus one and
total curvature $8\pi$, thus there does not exist
embedded examples in this class.

In contrast to the $\mathbb{R}^{3}$ case, the deformed
Enneper surface in $\mathbb{R}_{1}^{4}$ (as well as the
deformed Enneper end) could be embedded.
Thus we suspect that there might exist a deformation of
Chen-Gackstatter surface in $\mathbb{R}_{1}^{4}$ which
preserves all other properties yet avoids self-intersection.
This simple idea is the starting point of our exploration.
In particular, we give partial answers to the following questions:\\

{\bf Question 1 (uniqueness problem):} \emph{Does there exist other complete, immersed,
algebraic stationary surfaces in $\mathbb{R}_{1}^{4}$
defined on a torus with one end and total curvature
$-\int_{M} K \mathrm{d}M=8\pi$?}\\

{\bf Question 2 (embeddedness problem):} \emph{Among all possible new examples,
is there any one embedded in $\mathbb{R}_{1}^{4}$?}\\

The reader might think, in such a higher codimensional case,
the problem of embeddedness is somewhat trivial.
This view is justified when the ambient space is $\mathbb{R}^4$,
which contains a lot of embedded minimal tori with one end and total curvature $8\pi$ (see the end of the final section).
But in $\mathbb{R}^4_1$, the one more dimension corresponds
to a \emph{timelike} direction, which still put restrictions on
a stationary surface when one tries to extend it to a complete
\emph{spacelike} surface.

It is easy to observe that for any minimal surface
in $\mathbb{R}^3$ whose flux vector is horizontal,
there is a standard way of deformation
(\emph{the Lorentz deformation}) of this surface
into $\mathbb{R}_{1}^{4}$ with the same topology and is still
completely immersed. Applying this to the Chen-Gackstatter
surface in $\mathbb{R}^3$, we establish\\

{\bf Theorem A (see Theorem~\ref{thm-deform} and Proposition~\ref{prop-deform}).}
\emph{There exists a (real) 2-parameter family of deformations into
$\mathbb{R}_{1}^{4}$ of the Chen-Gackstatter surface.
Each of them still has genus one, a unique end,
with total curvature $-\int K\mathrm{d}M=8\pi$, and intersect with itself
at two points in general. Each of them shares the same
symmetry group $D_4$ as the classical one in $\mathbb{R}^3$.}\\

Besides this simple generalization, we show that
there is a larger family of deformations.\\

{\bf Theorem B (Theorem~\ref{thm-exist}).}
\emph{There exists a (real) 4-parameter family of deformations of
the Chen-Gackstatter surface which still has genus one,
a unique end, with total curvature $-8\pi$, and
the Gauss maps $\phi,\psi$ both have order $1$ at the end.}\\

To prove this existence result, we first write out the
possible Weierstrass data under the topological assumptions.
It involves essentially six independent complex parameters.
On the other hand, the two generating cycles of the torus $T^2$
put $8$ real period conditions on them in $\mathbb{R}^4_1$.
These period conditions can be put together to form
\emph{the period mapping} from the parameter space
(a domain in $\mathbb{C}^6$) to $\mathbb{R}^8$.
Since there is already the classical Chen-gackstatter surface in $\mathbb{R}^3$
satisfying the period conditions and regularity condition,
by showing that the zero vector $\vec{0}\in\mathbb{R}^8$ is
a regular value of the period mapping and applying the
preimage theorem, we show that when the Weierstrass data
was perturbed slightly in this $6$-dimensional parameter space,
there exists a 4-parameter family among them which solves
the period conditions in $\mathbb{R}^4_1$.
The regularity condition is also verified easily when
the perturbation is small enough.

So far we do not know how to write out this 4-parameter family
in Theorem~B explicitly, or whether there is any of them
embedded in $\mathbb{R}^4_1$. But under the further assumption
of a large symmetry group, we can obtain a uniqueness result as below.
Note that there has been many uniqueness and classification results
in the research of minimal and maximal surfaces when the symmetry group
is large(for recent example
we just mention \cite{Meeks+Wolf, Fujimori+Lopez}).\\

{\bf Theorem C (Theorem~\ref{thm-unique}).} \emph{Among Chen-Gackstatter surfaces of genus one, with a unique end and total curvature $-\int K\mathrm{d}M=8\pi$, the deformations given in Theorem~A are the only ones with a large symmetry group $G$ with order $|G|>4$.}\\

We would like to compare with the methods used to obtain
uniqueness result in $\mathbb{R}^3$.
The proof there needs to consider two distinct cases as below.

Case 1: the Gauss map has order 1 at the end;

Case 2: the Gauss map has order 2 at the end.

In each case one needs to write out the Weierstrass
data for the candidates explicitly which depend on several parameters,
and then to examine the period conditions.
The original proof in \cite{Lopez-1992} (also by
Bloss in 1989 in his PhD thesis)
were long and tedious, providing little insight
why such a uniqueness result should be true.

Later in \cite{Weber-2002}, Weber
developed the theory of period quotient maps and gave a simple, conceptual proof in Case~1.

On the other hand, Kusner used a simple argument to exclude Case~2 above, using the fact that the intersection multiplicity should be less than the (sum of) multiplicities (at $\infty$) of all the ends. (We call it \emph{the multiplicity inequality}.) The proof can be seen in a survey article \cite{Lopez-Martin-1999}. It is indeed a direct consequence of the monotonicity formula in $\mathbb{R}^3$.\\

In our proof to Theorem~C, besides the basic knowledge on symmetries of a torus with various modules, we also used Weber's ideas and results in \cite{Weber-2002} on period quotient maps in Case~1.

As to Case~2 in $\mathbb{R}^4_1$, unfortunately
the monotonicity formula in $\mathbb{R}^3$ could not be directly
generalized to our pseudo-Riemannian ambient space.
So we can not prove the multiplicity inequality in $\mathbb{R}_{1}^{4}$ as before. By Kusner's idea we only get a weaker result:\\

{\bf Theorem D (Proposition~\ref{thm-triple}).} \emph{Any complete, immersed, algebraic stationary surface in $\mathbb{R}^4_1$ defined on a torus with one end and total curvature $-\int_{M} K \mathrm{d}M=8\pi$ is not embedded
if the Gauss maps $\phi,\psi$ have order $2$ at the end.}\\

We organize this paper as below.

In Section~2 we briefly review the basic facts about
stationary surfaces in $\mathbb{R}_{1}^{4}$.

In Section~3, as a preparation for discussions of period conditions, we recall the notion of period quotient maps and useful notations and lemmas from Weber's paper
\cite{Weber-2002}. Here the periods are given by certain
elliptic integrals, and the conventions on signs are fixed.

Section~4 describes the Chen-Gackstatter surface as well as
its Lorentz deformation in $\mathbb{R}_{1}^{4}$. In particular
we show that they still share the same symmetry group (the dihedral group $D_4$ of order $8$). This establishes Theorem~A.

In Section~5, to deal with the existence and uniqueness of Chen-Gackstatter type surfaces, according to the orders of Gauss maps
$\{\phi,\psi\}$ at the end being $1$ or $2$,
we divide the discussion of possible examples into two cases.
In each case, based on the analysis of the divisors we write
out $\{\phi,\psi,\mathrm{d}h\}$ explicitly with some parameters
to be determined by the period conditions and the regularity condition $\phi\ne\bar\psi$.

The existence of the 4-parameter deformations (Theorem~B)
will be proved in Section~6. Based on the information given in Case~1,
we compute the Jacobian of
the period mapping and evaluate it at the specific parameter
combination which describes the Chen-Gackstatter surface
in $\mathbb{R}^3$. To show the rank of the Jacobian matrix is $8$, we need to evaluate
certain elliptic integrals given in Section~4.
The conclusion follows from this computation and the preimage theorem.

The uniqueness result (Theorem~C) is obtained in
Section~7. The assumption on symmetries forces
the underlying conformal structure to be equivalent to
either the square torus or to the equilateral torus,
and put strong restrictions on the divisors of $\phi,\psi,\mathrm{d}h$.
This greatly simplifies the discussion both in Case~1 and Case~2.

In the final Section~8, we prove that in Case~2 where Gauss maps
$\{\phi,\psi\}$ having order $2$ at the end, any possible example
always has a triple point (Theorem~D).\\

\textbf{Acknowledgement} This work is funded by the Project 10901006 of National Natural Science Foundation of China.
The authors would like to thank Professor Matthias Weber for helpful discussions on period quotient maps as well as the picture of Chen-Gackstatter surface in $\mathbb{R}^3$ created by him.
Professor Houhong Fan informed us the reference \cite{Eujalance} on symmetric tori. We thank Professor Changping Wang for consistent encouragement.

\section{Basic theory on stationary surfaces in $\mathbb{R}^4_1$}\label{sec-theory}

In this section we review the general theory about
oriented complete algebraic stationary surfaces
of finite total curvature established in \cite{Liu-Ma-Wang-Wang-2012}.
There will be no new results in this part.

First of all, stationary surfaces in $\mathbb{R}^4_1$ can
still be studied using complex functions on Riemann surfaces.
We now have two Gauss maps $\phi,\psi$ taking value in $S^2\backsimeq \mathbb{C}\cup\{\infty\}$. The generalized Weierstrass representation is the main tool we will use.

\begin{theorem}\label{thm-weierstrass}
Given holomorphic 1-form $\mathrm{d}h$ (called the height differential) and meromorphic functions
$\phi,\psi:M\rightarrow C\cup\{\infty\}$ (the Gauss maps)
globally defined on a Riemann surface $M$.
Suppose they satisfy the regularity condition
1),2) and period condition 3) as below:\\
\indent 1) $\phi\neq\overline{\psi}$ on $M$ and their poles do not coincide;\\
\indent 2) The zeros of $\mathrm{d}h$ coincide with the poles of $\phi \; or \;
 \psi$,with the same order;\\
\indent 3) Along any closed path the periods satisfy
\begin{align}
&\text{the horizontal period condition}:&~~~~&\oint_\gamma \phi \mathrm{d}h=-\overline{\oint_\gamma \psi
\mathrm{d}h}; \label{1}\\
&\text{the vertical period condition}:&~~~~\mathrm{Re}&\oint_\gamma \mathrm{d}h=\mathrm{Re}\oint_\gamma \phi\psi \mathrm{d}h=0. \label{2}
\end{align}
Then the conformal immersion given by
\begin{equation}\label{eq-weierstrass}
X=2\mathrm{Re} \int \Big(\phi+\psi, -i(\phi-\psi),
1-\phi\psi,1+\phi\psi\Big)\mathrm{d}h
\end{equation}
is a stationary surface $X:M\rightarrow \mathbb{R}_1^4$.
The induced metric is
\begin{equation}\label{eq-metric}
\mathrm{d}s^2=|\phi-\bar\psi|^2\cdot|\mathrm{d}h|^2.
\end{equation}
\indent Conversely, any stationary surface
$X:M\rightarrow \mathbb{R}_1^4$ can be represented as above
in terms of such $\phi,\psi,\mathrm{d}h $ over a Riemann surface $M$.
\end{theorem}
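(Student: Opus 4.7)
The plan is to treat the forward (sufficient conditions) and converse directions separately. For the forward direction I would verify four items in sequence: single-valuedness of $X$, conformality into $\mathbb{R}^4_1$, the induced metric formula \eqref{eq-metric}, and stationarity. The converse is handled by reading off $(\phi,\psi,\mathrm{d}h)$ from the isotropic $\mathbb{C}^4$-valued $(1,0)$-form $\partial X$.

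First I would check that $X$ is well-defined on $M$, i.e.\ that the real part of every period of $\Phi\,\mathrm{d}h$ vanishes, where $\Phi=(\phi+\psi,-i(\phi-\psi),1-\phi\psi,1+\phi\psi)$. The third and fourth entries reduce to $\mathrm{Re}\oint(1\mp\phi\psi)\mathrm{d}h$, which vanish by the vertical period condition \eqref{2}. For the first two, setting $\oint_\gamma\phi\,\mathrm{d}h=a+ib$, condition \eqref{1} forces $\oint_\gamma\psi\,\mathrm{d}h=-a+ib$; hence $\oint_\gamma(\phi+\psi)\,\mathrm{d}h=2ib$ and $\oint_\gamma(-i)(\phi-\psi)\,\mathrm{d}h=-2ia$ are both purely imaginary. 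Next I would verify conformality by direct expansion in Lorentz signature $(+,+,+,-)$:
\[
\Phi\cdot\Phi=(\phi+\psi)^2-(\phi-\psi)^2+(1-\phi\psi)^2-(1+\phi\psi)^2=4\phi\psi-4\phi\psi=0.
\]
A parallel expansion gives $\Phi\cdot\bar\Phi=2(|\phi|^2+|\psi|^2-\phi\psi-\bar\phi\bar\psi)=2|\phi-\bar\psi|^2$, which yields \eqref{eq-metric} up to the constant absorbed in the factor $2\mathrm{Re}$. The regularity conditions~1)--2) then translate exactly into: (i) the metric is positive definite, since $|\phi-\bar\psi|^2>0$ is condition 1); and (ii) $\Phi\,\mathrm{d}h$ is a globally holomorphic 1-form on $M$, because the non-coincidence of poles of $\phi$ and $\psi$ together with condition 2) guarantees that every pole of a component of $\Phi$ is cancelled by a zero of $\mathrm{d}h$ of the same order. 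Finally, with $F:=\int\Phi\,\mathrm{d}h$ locally holomorphic and $X=F+\bar F$, $X$ is harmonic in the conformal structure of $M$, and harmonicity plus conformality into a flat pseudo-Riemannian ambient space is equivalent to being stationary.

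For the converse I would work locally in an isothermal coordinate $z$, so that $\partial X:=X_z\,\mathrm{d}z$ is a holomorphic $\mathbb{C}^4$-valued $(1,0)$-form satisfying the isotropy relation $\partial X\cdot\partial X=0$. Solving this quadric in signature $(+,+,+,-)$ matches the Ansatz \eqref{eq-weierstrass} and yields
\[
\mathrm{d}h=\tfrac{1}{2}\bigl(\partial X^3+\partial X^4\bigr),\qquad\phi=\frac{\partial X^1+i\,\partial X^2}{\partial X^3+\partial X^4},\qquad\psi=\frac{\partial X^1-i\,\partial X^2}{\partial X^3+\partial X^4},
\]
which are globally defined on $M$ as a holomorphic 1-form and two meromorphic functions, since the ratios depend only on the conformal direction of $\partial X$. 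The period conditions are immediate from $\mathrm{Re}\oint_\gamma\partial X=0$, and the regularity conditions follow from the fact that $X$ is an immersion with spacelike induced metric.

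The step that requires the most care, rather than the most computation, is the geometric identification of $\phi,\psi$ with the two Gauss maps of a spacelike 2-surface in $\mathbb{R}^4_1$ taking values in $S^2\simeq\mathbb{C}\cup\{\infty\}$. This uses the splitting of the Grassmannian of oriented spacelike 2-planes in $\mathbb{R}^4_1$ as a product of two 2-spheres, a phenomenon specific to Lorentzian signature and developed in \cite{Liu-Ma-Wang-Wang-2012}. Once this identification is in place, every remaining assertion in the theorem reduces to the routine verifications outlined above.
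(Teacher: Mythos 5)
The paper itself contains no proof of this theorem: Section~2 presents it explicitly as a review of results established in the reference on stationary surfaces in $\mathbb{R}^4_1$ ("there will be no new results in this part"), so there is no in-paper argument to compare against. Your outline is the standard proof of this representation formula and its main computations are correct: the isotropy identity $(\phi+\psi)^2-(\phi-\psi)^2+(1-\phi\psi)^2-(1+\phi\psi)^2=0$, the period bookkeeping (the horizontal condition forcing the first two components of $\oint X_z\,\mathrm{d}z$ to be purely imaginary, the vertical condition handling the last two), the identity $\Phi\cdot\bar\Phi=2|\phi-\bar\psi|^2$ giving the metric, harmonic plus conformal implying stationary, and the inversion formulas $\phi=(\partial X^1+i\partial X^2)/(\partial X^3+\partial X^4)$, $\psi=(\partial X^1-i\partial X^2)/(\partial X^3+\partial X^4)$ in the converse all check out against \eqref{eq-weierstrass}. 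Two small points a full write-up should address: in the converse, the degenerate case $\partial X^3+\partial X^4\equiv 0$ must first be removed by a Lorentz rotation (cf.\ Remark~\ref{rem-congruent}) before $\phi,\psi,\mathrm{d}h$ can be read off; and recovering regularity conditions 1)--2) from the immersion hypothesis is not quite immediate --- one needs the order-counting observation that a common pole of $\phi$ and $\psi$, or a zero of $\mathrm{d}h$ not matched exactly by a pole of $\phi$ or $\psi$, would force the induced metric $|\phi-\bar\psi|^2\,|\mathrm{d}h|^2$ to degenerate or $X_z$ to vanish, contradicting the spacelike immersion assumption.
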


\begin{remark}\label{rem-congruent}
As pointed out in \cite{Liu-Ma-Wang-Wang-2012}, a Lorentz rotation in $\mathbb{R}^4_1$ will transform the Gauss maps $\phi,\psi$ and the height differential $\mathrm{d}h$ as below:
\begin{equation}\label{eq-congruent}
\phi \rightarrow  \frac{a\phi+b}{c\phi+d},~
\psi \rightarrow \frac{\bar{a}\psi+\bar{b}}{\bar{c}\psi+\bar{d}}, ~\mathrm{d}h \rightarrow
(c\phi+d)(\bar{c}\psi+\bar{d})\mathrm{d}h.\quad
\begin{pmatrix}a & b\\c & d \end{pmatrix} \in SL(2,\mathbb{C})
\end{equation}
Conversely, the action \eqref{eq-congruent} yields congruent
stationary surfaces in $\mathbb{R}^4_1$. This formula is frequently used to simplify our discussions.

We also note that a dilation in $\mathbb{R}^4_1$ will only change the height differential by multiplying with a real factor, i.e., $\mathrm{d}h\to a\mathrm{d}h$, without influence on $\phi,\psi$.
\end{remark}

Compared to minimal surfaces in $\mathbb{R}^3$, stationary surfaces in $\mathbb{R}^4_1$ are more complicated. One difficulty is that there
exist examples with essential singularities in the Weierstrass data yet the total curvature is still finite. The other one
comes from the so-called \emph{singular ends}, where
the limits of the two light-like normal directions coincide (i.e., $\phi=\bar\psi$ at such an end; otherwise the end will be called a \emph{regular end}).

Both phenomena are
revealed and discussed in \cite{Liu-Ma-Wang-Wang-2012}.
But in this paper we will only consider surfaces without
such kinds of singularities. Thus the Gauss-Bonnet type formulae given there could be simplified as below.

\begin{theorem}\label{thm-gaussbonnet}
Consider a compact Riemann surface punctured at several points (called the ends)
and denote it as $M \simeq \overline{M}-\{p_1,\cdots,p_r\}$.
Let $X: M \rightarrow \mathbb{R}^4_1$ be a complete stationary surface given by \eqref{eq-weierstrass} with
meromorphic Weierstrass data $\phi,\psi,\mathrm{d}h$ on $\overline{M}$ satisfying the regularity conditions and period conditions.
(This is called an \emph{algebraic stationary surface}).
We assume that $\phi\ne\bar\psi$ on the whole $\overline{M}$.
Then the Jorge-Meeks formula is still valid in our case:
\begin{eqnarray}
-\int_{M} K dM &=&4\pi \deg\phi ~=~4\pi \deg\psi \label{eq-deg}\\
&=&2\pi\big(2g-2+r+\sum_{j=1}^r d_j\big). \label{eq-jorgemeeks}
\end{eqnarray}
Here the left hand side is called the total (Gaussian) curvature; it is related with other topological quantities: $g$ is the genus of $\overline{M}$, $r$ is the number of ends, and $d_j+1$ is the order of poles of $X_z\mathrm{d}z$ at the end $p_j$.
\end{theorem}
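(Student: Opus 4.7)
The statement is the specialization of the Gauss–Bonnet/Jorge–Meeks formulas of \cite{Liu-Ma-Wang-Wang-2012} to the case where no essential singularities and no singular ends appear, so my plan is to sketch the residue-based proof, pointing out where the hypothesis $\phi\neq\bar\psi$ on $\overline{M}$ kills the correction terms that otherwise complicate the general picture.

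First I would pass to local conformal coordinates. Writing the metric \eqref{eq-metric} as $ds^{2}=e^{2u}|dz|^{2}$ with $e^{u}=|\phi-\bar\psi|\,|h'|$ and using $K=-4e^{-2u}u_{z\bar z}$, a direct computation gives
\begin{equation*}
u_{z\bar z}=\mathrm{Re}\!\left[\frac{\phi'\,\bar\psi'}{(\phi-\bar\psi)^{2}}\right],\qquad -K\,dM=-2i\,d(\partial u),\qquad \partial u:=u_{z}\,dz.
\end{equation*}
Thus $-K\,dM$ is locally exact away from the singularities of $\partial u$. Apply Stokes' theorem on $\overline{M}$ after excising small disks around every end $p_{j}$ and around every interior pole of $\phi$ or $\psi$. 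Regularity condition (2) forces $dh$ to have a zero of the same order at each such interior pole, and a direct Laurent expansion shows that $\phi'/(\phi-\bar\psi)$ cancels against $h''/h'$ inside $u_z$ at the interior poles of $\phi$ (and symmetrically for $\psi$), so these disks give no contribution as the radius shrinks. Hence $-\int_{M}K\,dM$ reduces to a sum of residues of $u_{z}\,dz$ at the ends only.

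To identify this sum with $4\pi\deg\phi$, I would evaluate the residue at each $p_{j}$ in terms of the orders $a_{j},b_{j},m_{j}$ of $\phi,\psi,dh$ at $p_{j}$ (the hypothesis $\phi\neq\bar\psi$ at $p_{j}$ eliminates anomalous mixed $z/\bar z$ terms), and combine with the canonical identity $P_\phi+P_\psi=2g-2+\sum_j m_j$ for the sums of interior pole orders, which follows from $\deg(dh)=2g-2$ together with condition (2) and the non-coincidence of poles of $\phi$ and $\psi$ from condition (1). The symmetric role of $\phi$ and $\psi$ in $u_{z\bar z}$ simultaneously gives $\deg\phi=\deg\psi$, establishing \eqref{eq-deg}. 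For \eqref{eq-jorgemeeks}, note that $d_{j}+1$ is by definition the maximum of the pole orders of the four components of $X_{z}\,dz=(\phi+\psi,-i(\phi-\psi),1-\phi\psi,1+\phi\psi)\,dh$ at $p_{j}$, equal to $\max(a_{j},b_{j},a_{j}+b_{j},0)+m_{j}$; plugging this into the divisor count above and summing over $j$ yields $2\deg\phi=2g-2+r+\sum_{j}d_{j}$.

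The main obstacle is the systematic case bookkeeping at each end: one must branch on which of $\phi,\psi,\phi\psi$ or the constant $1$ dominates $X_{z}\,dz$ (i.e.\ on the signs of $a_{j},b_{j}$ and their relative magnitudes), and treat the borderline configurations such as $a_{j}=b_{j}>0$ using the no–singular-end hypothesis to extract the correct leading Laurent coefficient. Once this is done uniformly across all cases the identification of $-\int K\,dM$ with both $4\pi\deg\phi$ and $2\pi(2g-2+r+\sum_{j}d_{j})$ drops out of the divisor arithmetic.
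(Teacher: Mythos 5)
First, a remark on what you are being compared against: the paper does not prove Theorem~\ref{thm-gaussbonnet} at all. Section~\ref{sec-theory} opens with ``There will be no new results in this part'' and the theorem is quoted from \cite{Liu-Ma-Wang-Wang-2012}, so there is no internal proof to match. Judged on its own terms, your sketch contains a genuine error at its central step. You apply Stokes' theorem to $u_z\,dz$ with $e^{u}=|\phi-\bar\psi|\,|h'|$, discard the interior poles of $\phi,\psi$ by the cancellation against $h''/h'$, and then compute ``residues at the ends'' from the local orders $a_j,b_j,m_j$ in coordinates centered at $p_j$. But $u_z\,dz$ is not a globally defined $1$-form: the piece $\tfrac{1}{2}(h''/h')\,dz$ transforms inhomogeneously under a change of chart, picking up $\tfrac12\,\partial\log(dz/dw)$, and these transition terms contribute exactly $2\pi\deg(K_{\overline M})=2\pi(2g-2)$ to the total. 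Your recipe therefore yields $2\pi\bigl(\deg\phi+\deg\psi-(2g-2)\bigr)$ rather than the correct $2\pi(\deg\phi+\deg\psi)$. A concrete test: for the catenoid viewed as a stationary surface ($\phi=z$, $\psi=-1/z$, $\mathrm{d}h=\mathrm{d}z/z$ on $S^2\setminus\{0,\infty\}$), the residue of $u_z\,dz$ in a coordinate centered at each end is $-1$, so your method gives $8\pi$, whereas the total curvature is $4\pi$. The standard repair is to split $u=\log|\phi-\bar\psi|+\log|h'|$: the first summand is a genuine global function (here is where $\phi\ne\bar\psi$ on $\overline M$ is used), so $\partial\log|\phi-\bar\psi|$ is a global $1$-form to which the residue argument legitimately applies --- and then the interior poles of $\phi,\psi$ \emph{do} contribute $2\pi$ times their orders --- while the second summand is accounted for by $\deg(\mathrm{d}h)=2g-2$ as a divisor statement, not chart by chart.

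There is a second gap: even after the corrected computation one only has $-\int K\,\mathrm{d}M=2\pi(\deg\phi+\deg\psi)$, and the assertion $\deg\phi=\deg\psi$ does not follow from ``the symmetric role of $\phi$ and $\psi$ in $u_{z\bar z}$''. Symmetry of the integrand in the pair $(\phi,\psi)$ says nothing about the two degrees separately, and divisor arithmetic with conditions (1), (2) and $\deg(\mathrm{d}h)=2g-2$ is tautological on this point (I encourage you to check that the two identities you get from $\deg(\phi\,\mathrm{d}h)=\deg(\psi\,\mathrm{d}h)=2g-2$ cancel identically). The equality $\deg\phi=\deg\psi$ is a substantive part of the theorem and needs its own argument, coming from the analysis of the regular ends in \cite{Liu-Ma-Wang-Wang-2012}. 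Your final bookkeeping for \eqref{eq-jorgemeeks} (identifying $d_j+1$ with the maximal pole order of the four components of $X_z\,\mathrm{d}z$ and summing) is the right shape, but it rests on the two steps above, so as written the proof does not go through.
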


\section{Elliptic integrals and period quotient maps}
\label{sec-period}

As a preparation to construct examples and to discuss existence
problems on genus 1 surfaces, we review the basic ideas and results in
\cite{Weber-2002} on period quotient maps in this section.
We follow the conventions in \cite{Weber-2002}
with some modification on notations.

It is well-known that any Riemann surface of genus 1
can be realized as a regular algebraic curve in $\mathbb{C}P^2$ with a standard form
\[
T_{\lambda} =\{[x,y,1]\in \mathbb{C}P^2 \mid y^2=x(x-1)(x-\lambda)\},~~~~
\lambda\in\hat{\mathbb{C}}-\{0,1,\infty\}.
\]
This is called an elliptic curve with modular invariant $\lambda$, with a holomorphic 1-form
\[
\omega=\mathrm{d}z=\mathrm{d}x/y,
\]
whose integration on $T_{\lambda}$ recovers
the universal covering $\mathbb{C}$ with a global coordinate $z$.
Choose two cycles $\gamma_1,\gamma_2$ on $T_{\lambda}$  generating $H_1(T_{\lambda},\mathbb{Z})$ and define
periods $\omega_j=\int_{\gamma_j}\omega, j=1,2.$
They generate a lattice $\Lambda(\omega_1,\omega_2)\subset\mathbb{C}$, with
\[
T_{\lambda}\simeq \mathbb{C}/\Lambda(\omega_1,\omega_2)
\simeq \mathbb{C}/\Lambda(1,\tau).
\]
The period quotient
\[
\tau=\tau(\lambda)=\omega_2(\lambda)/\omega_1(\lambda)
\]
is the lattice constant which describes exactly the shape of
the fundamental parallelogram of $T_{\lambda}$ and
distinguishes tori with different conformal types.\\

More generally, we consider a family of meromorphic 1-forms
\begin{equation*}
\alpha(r,s,t;\lambda)=x^r(x-1)^s(x-\lambda)^t\frac{\mathrm{d}x}{y}
\end{equation*}
on $T_{\lambda}$. Denote the periods by
\begin{equation*}
\alpha_j(r,s,t;\lambda)=\int_{\gamma_j}\alpha(r,s,t;\lambda),~~~~
(j=1,2)
\end{equation*}
and the period quotient map by
\begin{equation*}
\sigma(r,s,t;\lambda)
=\alpha_2(r,s,t;\lambda)/\alpha_1(r,s,t;\lambda).
\end{equation*}
In this paper, most of the time we are interested in the following 1-forms:
\begin{equation}\label{eq-Phi}
\omega(\lambda)= \frac{\mathrm{d}x}{y}=\alpha(0,0,0;\lambda), \quad
\Phi(\lambda)= \frac{x\mathrm{d}x}{y}=\alpha(1,0,0;\lambda).
\end{equation}
Hence their periods are denoted by
\begin{equation}\label{eq-Phij}
\omega_j(\lambda) =\oint_{\gamma_j} \omega(\lambda), \quad
\Phi_j(\lambda) =\oint_{\gamma_j} \Phi(\lambda),~~~~(j=1,2).
\end{equation}
For given real parameters $r,s,t$, these are generally
multi-valued functions on the modular sphere
$\lambda \in \hat{\mathbb{C}}-\{0, 1, \infty\}$.
But as in \cite{Weber-2002}, they will be regarded as
single-valued functions on the upper half plane.
Next one chooses $\gamma_1,\gamma_2$ as in \cite{Weber-2002}.
In particular, the orientation of the cycles is fixed by the convention that when $\lambda=-1$,
\begin{equation}\label{eq-Phisign}
\omega_1(-1)=\int_{\gamma_1}\frac{\mathrm{d}x}{y}\in\mathbb{R}^+, \quad
\omega_2(-1)=\int_{\gamma_2}\frac{x\mathrm{d}x}{y}\in \mathrm{i}\mathbb{R}^+.
\end{equation}
These period quotient maps are therefore well-defined in a consistent way by these conventions.

Extending classical results, Weber proved in \cite{Weber-2002} that
the period quotient map $\lambda \mapsto \sigma(r, s, t; \lambda)$
is a Riemann mapping from the upper half plane to a
circular triangle (with angles $|r+t|\pi,|s+t|\pi,|r+s|\pi$
at the images of the points $0,1,\infty$).
These geometric pictures help us to understand the behavior of
the period integrals clearly, and to solve uniqueness/existence
problems in minimal surface theory in an elegant and simple way.
For such applications, one key result we will use later is
as below.
\begin{theorem}[Theorem~2.22 in \cite{Weber-2002}]\label{thm-period}
$\lambda=-1$ is the only solution to the period-quotient equation
\begin{equation*}
\overline{\sigma(1,0,0;\lambda)}=\sigma(0,1,1;\lambda).
\end{equation*}
Here $\sigma(1,0,0;\lambda)$ and $\sigma(0,1,1;\lambda)$ are
the period quotient of $x\mathrm{d}x/y$,
and $(x-1)(x-\lambda)\mathrm{d}x/y=y\mathrm{d}x/x$, respectively.
\end{theorem}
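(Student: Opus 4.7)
Plan: My approach is to verify $\lambda=-1$ directly by exploiting the extra symmetry of the square torus, then deduce uniqueness from the fact (Weber's main theorem, quoted just before the statement) that the period quotient maps $\sigma(r,s,t;\cdot)$ are univalent onto circular triangles.

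For existence at $\lambda=-1$: the elliptic curve $T_{-1}:y^2=x(x-1)(x+1)=x^3-x$ is the square torus, admitting the order-$4$ automorphism $\tau:(x,y)\mapsto(-x,iy)$. A direct pullback computation gives
\[
\tau^{*}\!\Bigl(\tfrac{x\,dx}{y}\Bigr)=-i\,\tfrac{x\,dx}{y},\qquad
\tau^{*}\!\Bigl(\tfrac{(x-1)(x+1)\,dx}{y}\Bigr)=i\,\tfrac{(x-1)(x+1)\,dx}{y}.
\]
Choosing the homology basis $\{\gamma_1,\gamma_2\}$ consistent with \eqref{eq-Phisign} so that $\tau_{*}\gamma_1=\gamma_2$ (and $\tau_{*}\gamma_2=-\gamma_1$), one obtains $\Phi_2=-i\Phi_1$ and $\Psi_2=i\Psi_1$, hence $\sigma(1,0,0;-1)=-i$ and $\sigma(0,1,1;-1)=i$. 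Since $\overline{-i}=i$, the equation holds at $\lambda=-1$.

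For uniqueness in $\mathbb{H}$: univalence of each period quotient reduces the original equation to an antiholomorphic fixed-point problem
\[
\lambda=F(\lambda):=\sigma(0,1,1)^{-1}\!\Bigl(\overline{\sigma(1,0,0;\lambda)}\Bigr)
\]
on a subdomain of $\mathbb{H}$. The iterate $F\circ F$ is a holomorphic self-map of a hyperbolic planar domain, so by the Schwarz--Pick lemma it is either a hyperbolic automorphism or a strict contraction of the Poincar\'e metric; in the contracting case it has at most one fixed point. The $\mathbb{Z}/4$-symmetry used in Step 1 gives enough local information at $\lambda=-1$ to rule out the isometric case (the Jacobian of $F$ there is a genuine anti-linear contraction, not a reflection), so $\lambda=-1$ is the unique interior fixed point. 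Possible boundary solutions on $(-\infty,0)\cup(0,1)\cup(1,\infty)$ are excluded because each period quotient maps each boundary arc univalently onto an arc of its image triangle; writing the corresponding real/imaginary boundary conditions, the equation becomes a monotone scalar equation on each arc with at most one root, and the only root lies on $(-\infty,0)$ at $\lambda=-1$.

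The hard part will be verifying that $F$ is well-defined on a neighborhood of $\lambda=-1$ in $\mathbb{H}$, which amounts to a geometric comparison of the two circular triangles with vertex angles $(\pi,0,\pi)$ and $(\pi,2\pi,\pi)$: one must locate their vertices by evaluating the limits of the period integrals at $\lambda\in\{0,1,\infty\}$ (elementary Beta/Gamma-function identities) and check that conjugation of the first triangle lies inside the second near $i$. A secondary obstacle is ensuring cleanly that the Schwarz--Pick dichotomy excludes a geodesic of fixed points; I would handle this by an explicit computation of $\partial F/\partial\bar\lambda$ at $\lambda=-1$ via the chain rule, using the derivative formulas for the $\sigma$-maps implicit in Weber's setup.
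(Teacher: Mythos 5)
First, a remark on the comparison itself: the paper does not prove this statement — it is imported verbatim as Theorem~2.22 of \cite{Weber-2002}, so there is no internal proof to measure your attempt against. Weber's own argument proceeds by the global geometry of the two circular triangles $\sigma(1,0,0;\mathbb{H})$ and $\sigma(0,1,1;\mathbb{H})$ (with angles $(\pi,0,\pi)$ and $(\pi,2\pi,\pi)$), locating their vertices and edges explicitly and analysing where one triangle meets the conjugate of the other; it is not a fixed-point iteration. Your existence step at $\lambda=-1$ is correct and clean: the order-$4$ automorphism $(x,y)\mapsto(-x,\mathrm{i}y)$ of $y^2=x^3-x$ does give $\Phi_2=-\mathrm{i}\Phi_1$ and the analogous relation for the second form, consistent with the explicit integrals \eqref{-1}, so $\overline{\sigma(1,0,0;-1)}=\mathrm{i}=\sigma(0,1,1;-1)$.

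The uniqueness half, however, has a genuine gap. The Schwarz--Pick dichotomy you invoke (hyperbolic automorphism versus strict contraction with at most one fixed point) applies to a holomorphic \emph{self-map} of a single hyperbolic domain. Your $F$ is defined only on $D=\{\lambda\in\mathbb{H}:\overline{\sigma(1,0,0;\lambda)}\in\sigma(0,1,1;\mathbb{H})\}$ and maps $D$ into $\mathbb{H}$; nothing forces $F(D)\subseteq D$, so $F\circ F$ is not a self-map of $D$, nor of $D\cap F^{-1}(D)$. Without that invariance, two putative fixed points $p,q$ only yield $d_{\mathbb{H}}(p,q)\le d_{\Omega}(p,q)$ for a subdomain $\Omega\subseteq\mathbb{H}$, which is vacuous since $d_{\Omega}\ge d_{\mathbb{H}}$ automatically. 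This is not the minor "well-definedness near $\lambda=-1$" issue you flag at the end: solutions of $\overline{\sigma_1(\lambda)}=\sigma_2(\lambda)$ could a priori sit anywhere the two images overlap, so a Jacobian computation at $\lambda=-1$ can only exclude a one-parameter family of solutions through $-1$, not isolated solutions elsewhere in $\mathbb{H}$; and the boundary case ("monotone scalar equation on each arc") is asserted rather than proved. Closing these holes essentially requires the explicit global comparison of the two circular triangles via the limits of the period integrals at $\lambda\in\{0,1,\infty\}$ — which is Weber's proof; the contraction machinery does not substitute for it.
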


We collect some classical facts as below which will be used later. This is indeed part of the content of Lemma~2.1 in \cite{Weber-2002}. The third fact may be derived directly, or
see the more general Proposition~2.16 in \cite{Weber-2002}.
\begin{proposition}\label{prop-legendre}
On the elliptic curve $T_{\lambda}:y^2=x(x-1)(x-\lambda)$, we have
\begin{enumerate}
\item $x(z)$
has a unique pole and a unique zero, both of order $2$, on $T_{\lambda}$.
\item The Legendre relation: $\omega_1\Phi_2-\omega_2\Phi_1=8\pi \mathrm{i}$.
\item $\frac{x^2\mathrm{d}x}{y}\thickapprox \frac{2(\lambda+1)}{3}\Phi-\frac{\lambda}{3}\omega$. ($\thickapprox$ means both sides are equal up to an exact 1-form.)
\end{enumerate}
\end{proposition}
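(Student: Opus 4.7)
My plan is to prove the three items separately, in the order (1), (3), (2), since the first two are short and (2) is the substantive one.

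For \textbf{(1)}, I would observe that $x\colon T_{\lambda}\to\mathbb{C}P^1$ is the canonical degree-two branched cover, ramified exactly over $\{0,1,\lambda,\infty\}$. Since $y^2=x(x-1)(x-\lambda)$ forces $y=0$ whenever $x=0$, the fiber $x^{-1}(0)$ consists of the single ramification point $(0,0)$, at which $x$ must therefore vanish to order $2$. The same argument at the unique point at infinity yields the double pole. These exhaust the divisor of $x$, so it has the desired shape.

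For \textbf{(3)}, the computation is essentially one line. Differentiating $y^2=x^3-(\lambda+1)x^2+\lambda x$ gives
\begin{equation*}
2y\,\mathrm{d}y=\bigl(3x^2-2(\lambda+1)x+\lambda\bigr)\,\mathrm{d}x,
\end{equation*}
and after dividing by $y$ and rearranging I obtain
\begin{equation*}
\frac{x^{2}\,\mathrm{d}x}{y}=\frac{2(\lambda+1)}{3}\cdot\frac{x\,\mathrm{d}x}{y}-\frac{\lambda}{3}\cdot\frac{\mathrm{d}x}{y}+\mathrm{d}\bigl(\tfrac{2}{3}y\bigr),
\end{equation*}
which is precisely (3) since $\mathrm{d}(\tfrac{2}{3}y)$ is exact.

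The substantive statement is \textbf{(2)}, which I would derive by residues on a fundamental parallelogram. Work on the universal cover with coordinate $z$ satisfying $\mathrm{d}z=\omega$, and let $z_0\in\mathbb{C}/\Lambda(\omega_1,\omega_2)$ denote the image of $\infty\in T_{\lambda}$. By (1), $x(z)$ is a meromorphic function on the torus with a unique double pole, at $z_0$. A local uniformizer $w$ at $\infty$ is pinned down by $1/x=w^{2}(1+O(w^2))$ and $y=w^{-3}(1+O(w^2))$, which yields $\omega=-2\,\mathrm{d}w+\cdots$, hence $z-z_0=-2w+O(w^3)$ and
\begin{equation*}
x(z)=\frac{4}{(z-z_0)^{2}}+O(1).
\end{equation*}
Therefore any antiderivative $F(z)=\int x\,\mathrm{d}z$ has a simple pole at $z_0$ with residue $-4$, and is quasi-periodic with $F(z+\omega_j)-F(z)=\Phi_j$. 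Computing $\oint_{\partial P}F(z)\,\mathrm{d}z$ over the boundary of a fundamental parallelogram with $z_0$ in its interior, pairing opposite sides and using quasi-periodicity telescopes the integral to $\Phi_{1}\omega_{2}-\Phi_{2}\omega_{1}$, while the residue theorem gives $2\pi\mathrm{i}\cdot(-4)=-8\pi\mathrm{i}$. Equating the two expressions produces $\omega_{1}\Phi_{2}-\omega_{2}\Phi_{1}=8\pi\mathrm{i}$.

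The only genuinely delicate step is determining the leading coefficient $4$ (equivalently, the residue $-4$ of $F$ at $z_0$): the factor $2$ in $z-z_0\sim -2w$ is easy to miscount, and it is this factor, squared, that produces the constant $8\pi\mathrm{i}$ rather than $\pi\mathrm{i}$ or $2\pi\mathrm{i}$. Once that coefficient and the orientation conventions \eqref{eq-Phisign} are fixed, the sign of the final identity is forced.
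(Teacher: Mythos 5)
Your proof is correct. Note that the paper itself offers no argument for this proposition: it is stated as a collection of classical facts and deferred entirely to Lemma~2.1 and Proposition~2.16 of Weber's paper \cite{Weber-2002}. Your self-contained derivation is the standard one and all the delicate points check out: in (1) the ramification of the degree-two cover $x\colon T_\lambda\to\mathbb{C}P^1$ over $0$ and $\infty$ does force a double zero and double pole; in (3) differentiating the defining cubic gives exactly $\mathrm{d}\bigl(\tfrac{2}{3}y\bigr)=\frac{x^2\,\mathrm{d}x}{y}-\frac{2(\lambda+1)}{3}\Phi+\frac{\lambda}{3}\omega$; and in (2) the uniformizer $w=x/y$ gives $z-z_0=-2w+O(w^3)$, hence $x(z)=4(z-z_0)^{-2}+O(1)$ and residue $-4$ for the antiderivative $F$, so the contour computation yields $\Phi_1\omega_2-\Phi_2\omega_1=-8\pi\mathrm{i}$, i.e.\ the stated identity. (Equivalently, $x=4\wp+c$ reduces your computation to the classical Legendre relation $\eta_1\omega_2-\eta_2\omega_1=2\pi\mathrm{i}$, multiplied by $4$.) The one point worth making explicit is that the residue-theorem step requires the boundary of the fundamental parallelogram to be positively oriented, i.e.\ $\mathrm{Im}(\omega_2/\omega_1)>0$; this is guaranteed by the paper's sign conventions around \eqref{eq-Phisign} (at $\lambda=-1$ one has $\omega_1\in\mathbb{R}^+$, $\omega_2\in\mathrm{i}\mathbb{R}^+$), and one can sanity-check the result there since $\Phi_2=-\mathrm{i}\Phi_1$ with $\Phi_1<0$ gives $\omega_1\Phi_2-\omega_2\Phi_1=2|\omega_1\Phi_1|\,\mathrm{i}=8\pi\mathrm{i}$ numerically.
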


\section{Chen-Gackstatter surface and its Lorentz deformation in
$\mathbb{R}_1^4$}\label{sec-deform}

We begin this section by recalling the construction of the
Chen-Gackstatter surface in $\mathbb{R}^3$ and its basic properties (see \cite{Weber-2001} for a reference).
Using a very general method of deformation in $\mathbb{R}_1^4$,
we obtain a $2$-parameter family
of Chen-Gackstatter type stationary surfaces.
The symmetry property is similar.
Yet the self-intersection pattern changes: any of them
has exactly two self-intersection points in $\mathbb{R}_1^4$.

In the previous section we have given the Weierstrass
representation formula in $\mathbb{R}_1^4$ depending on the data $\phi,\psi,\mathrm{d}h$. When $\phi\psi\equiv -1$, we may rewrite
\[\phi=G,~~\psi=-1/G,\]
and we get the classical Weierstrass representation for minimal surfaces in $\mathbb{R}^3$.

In particular, the Chen-Gackstatter surface in $\mathbb{R}^3$
is represented on the elliptic curve
\[y^2=x(x-1)(x+1),\]
with the Gauss map $G$ and height differential $\mathrm{d}h$ given by
\begin{equation}\label{eq-CG}
G=\frac{1}{\rho}\cdot\frac{x}{y}, \qquad \mathrm{d}h=\mathrm{d}x.
\end{equation}
The real parameter $\rho$ can be explicitly determined as
\begin{equation}\label{eq-rho}
\rho=\frac{\sqrt{6}\Gamma(\frac{3}{4})}{\Gamma(\frac{1}{4})}
\approx 0.8279
\end{equation}
to satisfy the horizontal period condition.
The height differential $\mathrm{d}h$ is an exact $1$-form,
hence satisfying the vertical period condition in any case.
Below we summarize basic facts about this surface.

\begin{enumerate}
\item Divisors. Note that the underlying Riemann surface is a square torus punctured at one point. The divisors of $G$ and
$\mathrm{d}h$ are pictured in Figure~1 below, where the end is
located at the corner(s) of the square.
\begin{figure}[htbp]
\centering
\subfigure{
\label{G}
\includegraphics[width=0.4\textwidth]{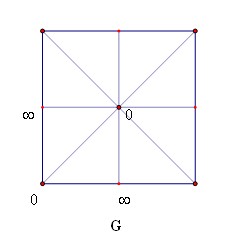}}
\subfigure{
\label{dh}
\includegraphics[width=0.4\textwidth]{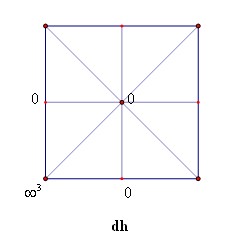}}
\caption{Divisors of the Weierstrass data}
\label{Gdh}
\end{figure}
\item Global picture.
This is a complete minimal
surface in $\mathbb{R}^3$ of finite total curvature $-\int K\mathrm{d}M=8\pi$ and genus $1$.
Its geometric behavior is shown in
Figure~2 below. (This picture is obtained from the on-line Minimal Surface Archive \cite{Weber-archive}.)
\begin{figure}[htbp]
\centering
\includegraphics[width=.7\textwidth]{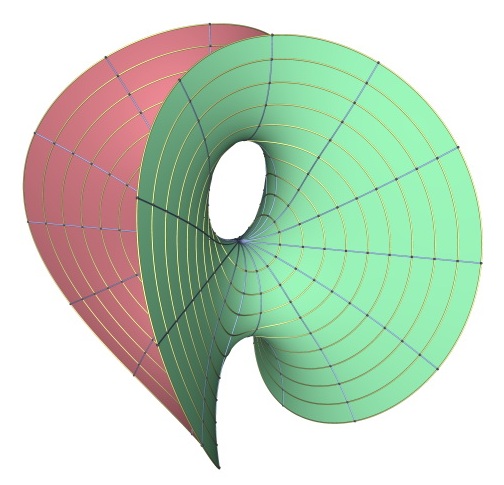}
\caption{The Chen-Gackstatter surface (courtesy of M. Weber)}
\end{figure}
\item Symmetry.
From the picture we see that it has
the same symmetry as the Enneper surface, which are generated
by the reflections with respect to the $OX_1X_3$ plane and
the $OX_2X_3$ plane, and two $180$-degree rotational symmetry
around the straight lines $\{X_1=\pm X_2, X_3=0\}$.
In particular the symmetry group is $D_4$ including $8$ elements.
This could be verified easily using \eqref{eq-CG} and
the Weierstrass representation formula, together with
the symmetry properties of the elliptic functions $x,y$
defined on curve $y^2=x(x-1)(x+1)$ as well as on the fundamental domain:
\begin{itemize}
   \item Under the reflection to the horizontal or the vertical lines passing through the center of the square, $x\to \bar{x},~y\to \pm \bar{y}.$
   \item Under the reflection to the two diagonals, $x\to -\bar{x},~y\to \pm \mathrm{i}\bar{y}.$
\end{itemize}
\item
Self-intersection. It has an Enneper end of multiplicity $3$;
thus it is not embedded in $\mathbb{R}^3$).
From Figure~2 we can see that along the self-intersection line
on this surface, there are only two self-intersection points
located on the vertical $X_3$-axis.
\end{enumerate}

In \cite{Liu-Ma-Wang-Wang-2012}, when we constructed
the generalized Jorge-Meeks k-noid in Example~6.6,
we used a special deformation
depending on two complex parameters $a, b$ satisfying
$a^2-b^2=1$ and $|a|>|b|$. If we assume that
$X_z dz=(\Theta_1, \Theta_2, \Theta_3)$ is the holomorphic differential of a
minimal surface $X$ in $\mathbb{R}^3$, then this deformation
$\widetilde{X}$ in $\mathbb{R}^4_1$ is given by
\begin{equation}\label{eq-deform}
\widetilde{X}_z dz=(\Theta_1, \Theta_2, a \Theta_3, b \Theta_3)
=(\Theta_1, \Theta_2, \frac{\zeta+\zeta^{-1}}{2}\Theta_3, \frac{\zeta-\zeta^{-1}}{2} \Theta_3)
\end{equation}
where $a= \frac{\zeta+\zeta^{-1}}{2},b= \frac{\zeta-\zeta^{-1}}{2}$ is the general solution to
$a^2-b^2=1$, and $\zeta=a+b$.

\begin{definition}\label{def-deform}
The stationary surface $\widetilde{X}$ given by \eqref{eq-deform}
is called \emph{the Lorentz deformation} in $\mathbb{R}^4_1$
of the original minimal surface $X: M \longrightarrow \mathbb{R}^3$.
It depends on a complex parameter
 $\zeta\in \mathbb{C}-\mathrm{i}\mathbb{R}$.
When $X$ has Gauss map $G$
and height differential $\mathrm{d}h$, the Weierstrass data
of $\widetilde{X}$ is given by
\begin{equation}\label{eq-deform2}
\phi=\frac{G}{\zeta}~, \quad
\psi=\frac{-1}{\zeta G}~,\quad
\widetilde{\mathrm{d}h}=\zeta \mathrm{d}h~.
\end{equation}
\end{definition}

\begin{theorem}\label{thm-deform}
Suppose $X:M\to \mathbb{R}^3$ is a complete immersed
minimal surface without real or imaginary vertical periods
(i.e., $\int_\gamma \mathrm{d}h=0$
along any closed curve $\gamma$ on $M$).
Suppose $\zeta\in \mathbb{C}\backslash\{0\}$ is not purely imaginary.
Then $\widetilde{X}$ given above by \eqref{eq-deform} is
a (real) $2$-parameter family of complete stationary surfaces fully immersed in $\mathbb{R}^4_1$ (i.e., any of them is not contained in any $3$-dimensional affine subspace) with regular ends.

Conversely, any fully immersed stationary surface
$\widetilde{X}:M\to\mathbb{R}^4_1$ satisfying
$\phi\psi=-1/\zeta^2$ for some constant $\zeta$
is a Lorentz deformation of
an immersed minimal surface $X:M\to\mathbb{R}^3$.
\end{theorem}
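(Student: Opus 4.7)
The plan is to run a direct verification in both directions, exploiting the fact that the Weierstrass data of $\widetilde X$ differ from those of the minimal surface $X$ in $\mathbb R^3$ only by constant factors in $\zeta$. Both the regularity conditions and the period conditions of Theorem~\ref{thm-weierstrass} will translate, essentially term by term, into conditions on $G$ and $\mathrm dh$ that are already known for $X$.

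First I would verify the regularity conditions. The pole/zero bookkeeping for $\phi=G/\zeta$, $\psi=-1/(\zeta G)$ and $\widetilde{\mathrm dh}=\zeta\mathrm dh$ is immediate from that of $G$ and $\mathrm dh$, so condition 1) (distinct pole sets) and condition 2) (orders of zeros of $\widetilde{\mathrm dh}$ matching poles of $\phi$ or $\psi$) follow from the analogous statements for $X$. The pointwise inequality $\phi\neq\bar\psi$ reduces to $|G|^2\bar\zeta+\zeta\neq 0$, i.e.\ $|G|^2\neq -\zeta/\bar\zeta$; since $-\zeta/\bar\zeta$ is a non-negative real number precisely when $\zeta\in\mathrm i\mathbb R$, the hypothesis $\zeta\notin\mathrm i\mathbb R$ is exactly what is needed. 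Substituting \eqref{eq-deform2} into the period integrals gives $\oint\phi\,\widetilde{\mathrm dh}=\oint G\,\mathrm dh$, $\oint\psi\,\widetilde{\mathrm dh}=-\oint\mathrm dh/G$, $\oint\widetilde{\mathrm dh}=\zeta\oint\mathrm dh$ and $\oint\phi\psi\,\widetilde{\mathrm dh}=-(1/\zeta)\oint\mathrm dh$. The horizontal period equation \eqref{1} in $\mathbb R^4_1$ becomes word-for-word the horizontal period condition of $X$ in $\mathbb R^3$, and the two vertical equations \eqref{2} both reduce to the hypothesis $\oint_\gamma\mathrm dh=0$.

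For completeness and regularity of the ends I would compare the induced metrics. A short calculation with $t=|G|^2$ and $\theta=\arg\zeta$ yields
$$\frac{|\phi-\bar\psi|^2\,|\widetilde{\mathrm dh}|^2}{|G+1/\bar G|^2\,|\mathrm dh|^2}=\frac{t^2+2t\cos(2\theta)+1}{(t+1)^2},$$
and elementary one-variable analysis shows this quotient lies in $[\cos^2\theta,1]$ for all $t\ge0$. When $\zeta\notin\mathrm i\mathbb R$ one has $\cos\theta\neq0$, so the two metrics are bi-Lipschitz equivalent; completeness of $X$ then yields completeness of $\widetilde X$, and the same strict positive lower bound prevents $\phi=\bar\psi$ near the ends, ruling out singular ends. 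The full-immersion assertion I would verify by writing $(\widetilde X_3,\widetilde X_4)$ as $\mathbb R$-linear combinations of $\mathrm{Re}\,h$ and $\mathrm{Im}\,h$ with $h=\int\mathrm dh$; the determinant of the resulting $2\times2$ matrix turns out to be a nonzero multiple of $\sin(2\arg\zeta)$, hence nonvanishing precisely when $\zeta^2\notin\mathbb R$, which is the generic locus in the 2-parameter family.

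The converse is essentially the same computation run backwards. Given $\widetilde X$ with $\phi\psi\equiv -1/\zeta^2$, I set $G=\zeta\phi$ and $\mathrm dh=\widetilde{\mathrm dh}/\zeta$; the assumption forces $\psi=-1/(\zeta G)$, so $(G,\mathrm dh)$ is candidate Weierstrass data for a minimal surface in $\mathbb R^3$. Regularity transfers back under this substitution, and the horizontal $\mathbb R^4_1$-period condition rewrites as the $\mathbb R^3$-horizontal period condition. The two $\mathbb R^4_1$-vertical conditions of Theorem~\ref{thm-weierstrass} merely say that $\zeta\oint\mathrm dh$ and $(1/\zeta)\oint\mathrm dh$ are both purely imaginary, and the main obstacle is upgrading this to $\oint\mathrm dh=0$. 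This is where the full-immersion hypothesis enters: if some period were nonzero, then the ratio of the two purely imaginary quantities above would be the real number $\zeta^2$, but the same determinant computation as in the direct part shows that $\zeta^2\in\mathbb R$ is incompatible with full immersion of $\widetilde X$. Hence every period of $\mathrm dh$ vanishes, $X$ is a well-defined complete immersed minimal surface in $\mathbb R^3$, and $\widetilde X$ is its Lorentz deformation in the sense of Definition~\ref{def-deform}.
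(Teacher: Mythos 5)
Your proposal is correct and follows essentially the same route as the paper's proof: the same pointwise computation showing that $\phi=\bar\psi$ forces $|G|=1$ and $\zeta\in\mathrm{i}\mathbb{R}$, a comparison of the induced metric with that of $X$ to get completeness, and, in the converse, the observation that the two vertical period conditions $\mathrm{Re}\oint\widetilde{\mathrm{d}h}=0$ and $\mathrm{Re}\big(\zeta^{-2}\oint\widetilde{\mathrm{d}h}\big)=0$ force $\widetilde{\mathrm{d}h}$ to be exact once $\zeta^2\notin\mathbb{R}$. The minor differences are these: you verify the period conditions by substituting directly into the Weierstrass integrals, whereas the paper reads them off from the real formula $\widetilde{X}_\theta=(X_1,X_2,\cos\theta\, X_3,-\sin\theta\, X_3^*)$ with $X_3^*$ the harmonic conjugate of $X_3$ (well defined by the hypothesis on vertical periods) --- the two are equivalent; you make the metric comparison quantitative with the explicit ratio bound $[\cos^2\theta,1]$ where the paper merely asserts that completeness is inherited; and you replace the paper's appeal to Remark~\ref{rem-3space} by a direct determinant computation for full immersion, which is used again in the converse. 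Your remark that full immersion holds only on the locus $\zeta^2\notin\mathbb{R}$ (so fails for real $\zeta$, where $\widetilde{X}$ is congruent to $X\subset\mathbb{R}^3$) is a point the theorem's statement glosses over, but it is the correct reading and is consistent with Remarks~\ref{rem-1parameter} and~\ref{rem-3space}.
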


\begin{remark}\label{rem-1parameter}
When parameter $\zeta$ takes two different values $\zeta_1,\zeta_2$
which are real linear dependent (i.e., $\zeta_2/\zeta_1=t\in\mathbb{R}^+$), the corresponding deformations
are congruent to each other under a Lorentz transformation
in $\mathbb{R}^4_1$. One needs only to notice Remark~\ref{rem-congruent} and take
$\left(\begin{smallmatrix}a &b\\c &d \end{smallmatrix}\right)=\left(\begin{smallmatrix}\sqrt{t} &0\\0 &1/\sqrt{t} \end{smallmatrix}\right)$ in
\eqref{eq-congruent}.
So there is only a $S^1$-family of non-congruent
deformations depending on the argument $\mathrm{Arg}(\zeta)$.
For convenience, sometimes we may take
\[
\zeta=\mathrm{e}^{\mathrm{i}\theta},\quad
a=\cos\theta,\quad b=\mathrm{i}\sin\theta
\]
without loss of generality.
In that case it is related to the original minimal surface
$X=(X_1,X_2,X_3):M\to\mathbb{R}^3$ by
\begin{equation}\label{eq-deform3}
\widetilde{X}_\theta=(X_1,X_2,\cos\theta X_3,-\sin\theta X_3^*),
\end{equation}
where $X_3^*$ is the harmonic conjugate of $X_3$.
\end{remark}

\begin{remark}\label{rem-3space}
For a stationary $\widetilde{X}:M\to\mathbb{R}^4_1$,
it is contained in some $3$-dimensional affine subspace of
$\mathbb{R}^4_1$ if, and only if, $\phi\psi$ is a real constant.
In particular, when this constant is a negative number,
this surface is congruent to the original $X$.
(up to a Lorentz transformation in $\mathbb{R}^4_1$).
Together with the converse part of the theorem above, we see
that stationary surfaces with $\phi\psi\equiv c\in \mathbb{C}$ are essentially
coming from classical examples in $3$-space.
\end{remark}

\begin{proof}[Proof to Theorem~\ref{thm-deform}]
The period conditions are almost trivial due to \eqref{eq-deform3}. We need only to add that
the harmonic conjugate $X_3^*$ of $X_3$ is a well-defined function by the vanishing of vertical periods.

Next we verify that the Weierstrass data given in \eqref{eq-deform}
satisfy the regularity conditions 1),2) as in Theorem~\ref{thm-weierstrass}.
The second one is trivial since the divisors
are the same as the original $X$ in $\mathbb{R}^3$.
On the other hand, at one point $p$,
\[
\phi(p)=\overline{\psi(p)}~ \Leftrightarrow~
\frac{G(p)}{\zeta}=\frac{-1}{\overline{\zeta G(p)}}
~ \Leftrightarrow~ |G(p)|=1,\zeta\in \mathrm{i}\mathbb{R}.
\]
So when $\zeta\notin \mathrm{i}\mathbb{R}$,
the regularity condition (1) holds true at any point. In particular the end is a regular end.
(Note that by Fujimoto's famous theorem \cite{Fujimoto}, the image of $G$
can not omit more than $4$ points on $\mathbb{C}\cup\{\infty\}$
when the original minimal surface $X:M\to\mathbb{R}^3$ is complete,
and there always exists a point $p$ with $|G(p)|=1$.
Thus $\zeta\notin \mathrm{i}\mathbb{R}$ is also necessary
to guarantee regularity.)

By the formula \eqref{eq-metric}, the deformation $\widetilde{X}$ has metric
\[
\mathrm{d}s^2=|\phi-\bar\psi|^2\cdot|\widetilde{\mathrm{d}h}|^2
=\left|G-\frac{\zeta}{\bar\zeta \bar{G}}\right|^2\cdot|\mathrm{d}h|^2
\]
Since $\zeta\notin \mathrm{i}\mathbb{R}$, the completeness of
the metric $|G-1/\bar{G}|^2|\mathrm{d}h|^2$ implies that
$\widetilde{X}\to\mathbb{R}^4_1$ is also complete. This finishes
the proof of the first part.

To show the converse, suppose $\widetilde{X}$ is
fully immersed in $\mathbb{R}^4_1$ with Weierstrass data
$\{\phi,\psi,\widetilde{\mathrm{d}h}\}$ and
$\phi\psi=-1/\zeta^2$ is a constant, which is not a real number
by Remark~\ref{rem-3space}.
The vertical period condition
\[
\mathrm{Re}\oint_\gamma\widetilde{\mathrm{d}h}=0,\quad \mathrm{Re}\oint_\gamma\phi\psi\widetilde{\mathrm{d}h}
=\mathrm{Re}\left(\frac{-1}{\zeta^2}\oint_\gamma \widetilde{\mathrm{d}h}\right)=0
\]
for any closed path $\gamma$ implies that $\widetilde{\mathrm{d}h}$ is exact. Then the desired $X\to\mathbb{R}^3$ is described by
\[
G=\zeta\phi, \quad \mathrm{d}h=\frac{1}{\zeta}\widetilde{\mathrm{d}h}.
\]
It is obvious that $\widetilde{X}$ is a Lorentz deformation of
 $X:M\to\mathbb{R}^3$.
\end{proof}

\begin{remark}
The Alias-Palmer deformation given in \cite{Alias} (see also Example~6.5 in \cite{Liu-Ma-Wang-Wang-2012})
also satisfies $\phi\psi=-a$ for a complex constant $a$.
Yet it differs from our Lorentz deformation.
In particular, it requires that the original minimal surface $X$
in $\mathbb{R}^3$ has neither real nor imaginary periods.
Compared to \eqref{eq-deform2}, the formula for the Alias-Palmer deformation can be given explicitly as:
\[
\phi=G,\psi=-a/G, \widetilde{\mathrm{d}h}=\mathrm{d}h.
\]
Upon closer investigation we find that it
is obtained from $X$ after three steps. First we take some suitable $X'$ in the associated family of $X$, and scale it,
which only changes the height differential
$\mathrm{d}h\to \sqrt{a}\mathrm{d}h$. The second step is applying the Lopez-Ros deformation \cite{Lopez-Ros, Meeks-Perez, Lopez-Martin-1999} and a rotation in the $OX_1X_2$ plane, which only changes the Gauss map
$G\to \sqrt{a}G$. By the vanishing of periods, we still get
a complete minimal surface in $\mathbb{R}^3$ of the same topology type. Finally we use the Lorentz deformation with $\zeta=\sqrt{a}$, and the result is equivalent to the Alias-Palmer deformation.
\end{remark}

The next proposition describes the geometry of the deformed Chen-Gackstatter surface $\widetilde{X}$.
\begin{proposition}\label{prop-deform}
The Lorentz deformation $\widetilde{X}$ of the Chen-Gackstatter surface $X$ is a real $2$-parameter family of
complete stationary surfaces in $\mathbb{R}_1^4$ defined on a torus with
a regular end, and the total curvature is $-\int K \mathrm{d}M=8\pi$. Moreover:

(1) $\widetilde{X}$ has two self-intersection points when
$\zeta$ is not a real number.

(2) $\widetilde{X}$ has $8$ symmetries like the original Chen-Gackstatter surface $X$.
\end{proposition}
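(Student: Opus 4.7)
My plan is to dispatch the topological and analytic assertions first, and then treat parts (1) and (2) separately, with (1) the more delicate.

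First I would verify the preliminary items by specializing Theorem~\ref{thm-deform}. Since $\mathrm{d}h=\mathrm{d}x$ is exact on the Chen-Gackstatter surface $X$, all vertical periods vanish, so the theorem applies and produces $\widetilde{X}$ as a real $2$-parameter family (parametrized by $\zeta\in\mathbb{C}\setminus \mathrm{i}\mathbb{R}$) of complete, fully immersed stationary surfaces on the same torus, with a regular end. The Jorge--Meeks formula \eqref{eq-deg} applied to the deformed data \eqref{eq-deform2} gives
\[
-\int_M K\,\mathrm{d}M=4\pi\deg(G/\zeta)=4\pi\deg G=8\pi,
\]
since $G$ has degree $2$ by the divisor picture in Figure~\ref{Gdh}. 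From here on I work with the normalized representative $\widetilde{X}_\theta=(X_1,X_2,\cos\theta\,X_3,-\sin\theta\,X_3^*)$ of Remark~\ref{rem-1parameter}, which costs no generality by Lorentz congruence.

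For part~(2), each of the $8$ classical $D_4$-symmetries of $X$ is induced by an (anti)holomorphic automorphism $\sigma$ of the square torus; the four antiholomorphic ones are listed explicitly at the end of Section~\ref{sec-deform}, and the four orientation-preserving ones arise as compositions. For each such $\sigma$ I would compute the pull-back of $\mathrm{d}x$, which lies in $\{\pm\mathrm{d}x,\pm\mathrm{d}\bar x,\pm\mathrm{i}\,\mathrm{d}x,\pm\mathrm{i}\,\mathrm{d}\bar x\}$; this pins down the action of $\sigma$ on the complex height $h=\int\mathrm{d}h$ and hence on the pair $(X_3,X_3^*)$ as a signed permutation. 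It then follows that $\widetilde{X}_\theta\circ\sigma=T_\sigma\circ\widetilde{X}_\theta$, where $T_\sigma$ is the product of the original Euclidean symmetry on the first three coordinates with $\pm 1$ on the time-like fourth coordinate---still an isometry of $\mathbb{R}_1^4$. This exhibits $D_4$ inside the symmetry group of $\widetilde{X}_\theta$. To rule out extra symmetries, I would argue that any ambient isometry preserving $\widetilde{X}_\theta$ must descend to a conformal automorphism of the square torus preserving the divisors of Figure~\ref{Gdh}, and these are exhausted by $D_4$.

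Part~(1) is the main obstacle. Assuming $\sin\theta,\cos\theta\neq 0$, a pair $p\neq q$ yields a self-intersection of $\widetilde{X}_\theta$ if and only if $X(p)=X(q)$ in $\mathbb{R}^3$ \emph{and} $X_3^*(p)=X_3^*(q)$. The first condition restricts $(p,q)$ to the self-intersection locus of the classical surface, which maps into the $X_3$-axis as recalled in Section~\ref{sec-deform}. On this locus I would pair the two sheets by an anti-holomorphic involution $\sigma$ of $M$ that fixes the $X_3$-axis pointwise; since $\sigma^*\mathrm{d}x=\pm\mathrm{d}\bar x$, one obtains $X_3^*(q)=-X_3^*(p)$, so the matching condition collapses to $X_3^*(p)=0$. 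I would finish by counting the zeros of the harmonic function $X_3^*=\mathrm{Im}\int\mathrm{d}x$ along the preimage curve; invoking the $D_4$-action and the divisor structure of $x$ on the torus, exactly two solutions are expected to appear, recovering the two self-intersection points highlighted in Section~\ref{sec-deform}. Making this count rigorous---by locating the preimage curve inside the fundamental domain and tracking $\mathrm{Im}\,x$ along it---is the step I expect to cost the most care.
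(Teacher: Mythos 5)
Your preliminary reductions and part~(2) are sound and essentially match the paper: the paper also normalizes $\zeta=\mathrm{e}^{\mathrm{i}\theta}$, writes $\widetilde{X}=(X_1,X_2,4\cos\theta\,\mathrm{Re}(x),4\sin\theta\,\mathrm{Im}(x))$, and checks that the listed symmetries of $x,y$ extend to isometries of $\mathbb{R}^4_1$ of the form you describe. (Ruling out \emph{extra} symmetries is not needed for the statement, and your proposed divisor argument for that is not as immediate as you suggest, but this is harmless.)

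Part~(1) is where there is a genuine gap, and also where you miss the short route. First, your reformulation ``$\widetilde{X}_\theta(p)=\widetilde{X}_\theta(q)$ iff $X(p)=X(q)$ and $X_3^*(p)=X_3^*(q)$'' is correct, but you then fail to exploit that it contains the two real equations $X_3(p)=X_3(q)$ and $X_3^*(p)=X_3^*(q)$ simultaneously: since $X_3+\mathrm{i}X_3^*$ is (up to a constant) the elliptic function $x$, these two together give $x(p)=x(q)$ outright. From $y^2=x(x^2-1)$ and $p\ne q$ one then gets $y(p)=-y(q)$, hence $q=-p$ under the hyperelliptic involution, and the oddness of $X_1=2\mathrm{Re}\int G\,\mathrm{d}h$ and $X_2$ forces $X_1(p)=X_2(p)=0$; the self-intersections of $\widetilde{X}$ are therefore exactly the two axis points of Fact~4. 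This is the paper's argument, and it never needs the global self-intersection set of $X$. Your route instead passes through that set, and two of its steps do not hold up: (i) the claim that the self-intersection locus of $X$ ``maps into the $X_3$-axis'' is false --- Fact~4 says only that the self-intersection \emph{curve} meets the $X_3$-axis in two points, not that it lies on the axis (near the Enneper-type end the surface self-intersects along a whole curve); (ii) the assertion that the two preimage sheets over that curve are paired by an anti-holomorphic involution of $M$ is plausible by analogy with Enneper but is nowhere established, and proving it is essentially as hard as the global statement you are trying to avoid. The concluding zero-count of $\mathrm{Im}(x)$ along the preimage curve is likewise only ``expected.'' I recommend replacing the whole of part~(1) by the direct argument above.
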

\begin{proof}
We prove the two listed properties. Other ones are quite straightforward to see.
In terms of the previous section, the underlying Riemann surface
$M$ is equivalent to a square torus $\overline{M}$ punctured at one point. Here
\[
\overline{M}\cong\{[x,y,1] \in \mathbb{C}P^2:y^2=x(x-1)(x+1)\}\cong \mathbb{C}/\Lambda(1,\mathrm{i}),
\]
where $\Lambda(1,\mathrm{i})$ denotes the lattice group generated by $\{1,\mathrm{i}\}$.
Let $z$ be a global coordinate of $\mathbb{C}$,
then $x=x(z)$ is something like a Weierstrass-$\wp$ function.
Conversely, given the elliptic curve $y^2=x(x-1)(x+1)$,
$z$ is given by the holomorphic $1$-form $\mathrm{d}z=\mathrm{d}x/y$.
We will use coordinate $z$ to describe points on $M$.

To find out the symmetries of $\widetilde{X}$,
without loss of generality we assume
$\zeta=\mathrm{e}^{\mathrm{i}\theta}$ according to Remark~\ref{rem-1parameter}. $\zeta$ is not real or purely imaginary by our assumptions. The formula
\eqref{eq-deform3} now takes the form
\begin{equation}\label{eq-deform4}
\widetilde{X}=(X_1,X_2,4\cos\theta \mathrm{Re}(x),4\sin\theta\mathrm{Im}(x)),\quad \cos\theta\sin\theta\ne 0.
\end{equation}
It is easy to see that the symmetries of $x,y$
(see Fact~3 about the Chen-Gackstatter surface)
extend to symmetries of the ambient space $\mathbb{R}^4_1$.
This verifies the second conclusion.

At any self-intersection point with
\[
\widetilde{X}(z')=\widetilde{X}(z''),\quad z'\ne z''(\mathrm{mod}~\Lambda),
\]
by comparing the third and fourth components in \eqref{eq-deform4}
we know $x(z')=x(z'')$. The equation $y^2=x(x^2-1)$ implies $y(z')=\pm y(z'')$.
Yet $z',z''$ correspond to distinct points on the torus;
this forces $y(z')=-y(z'')$. Such a pair $(z',z'')$ could
be chosen in the fundamental domain containing $z=0$, and
by the symmetry property of $x,y$, we get $z''=-z'$.
The symmetry properties of the Chen-Gackstatter surface
show that the horizontal components
\[
X_1(z'')=X_1(-z')=-X_1(z'), X_2(z'')=X_2(-z')=-X_2(z').
\]
(Or use \eqref{eq-CG} to see that both $\int G\mathrm{d}h$
and $\int \mathrm{d}h/G$ are odd functions).
Thus $\widetilde{X}(z')=\widetilde{X}(-z')$ forces
\[X_1(z')=X_2(z')=0=X_1(z'')=X_2(z'').\]
So this corresponds exactly to the self-intersection points
of $X$ in $\mathbb{R}^3$ on the $X_3$-axis.
By Fact~4 about the Chen-Gackstatter surface,
we know there are exactly two of such self-intersections.
\end{proof}

Observe that the Lorentz deformation makes the Enneper end embedded in $\mathbb{R}^4_1$, yet the whole surface still fails to be embedded. Since Enneper surface could be deformed into an
embedded surface in $\mathbb{R}^4_1$, we are wondering whether
it is possible to do similar things for Chen-Gackstatter type surfaces.
This motivates the discussion in the rest of this paper.

\section{Weierstrass data of Chen-Gackstatter type surfaces in $\mathbb{R}^4_1$}
\label{sec-divisor}

To answer Question~1 and 2 in the introduction,
we will describe the possible Weierstrass data
for Chen-Gackstatter type surfaces in this section.
Such a complete immersed stationary surface
$X:M\rightarrow \mathbb{R}^4_1$ is assumed to be of algebraic type,
with total curvature $-\int _M K \mathrm{d}M=8\pi$,
defined on $M \cong T^2 - \{P\}$, a torus with one regular end
(i.e.,$\phi\ne\bar\psi$ at $P$). (The exceptional case with a good singular end will not be considered in this paper.)

According to Remark~\ref{rem-congruent}, without loss of generality, we can assume that at the end,
$\phi(P)=0,~ \psi(P)=\infty,~ \mathrm{d}h(P)=\infty$. Denote
\[
\phi(P)=0^l,\psi(P)=\infty^m,\mathrm{d}h(P)=\infty^n,
\]
where $l,m,n\in\mathbb{Z}^+$ are the corresponding multiplicities.

By Theorem~\ref{thm-gaussbonnet} and our assumptions, we know
$\deg\phi=\deg\psi=2$ and the multiplicity at the unique end $d=3$. Thus $1\le l,m\le 2$.

The highest order pole of $X_z\mathrm{d}z$ comes from $\psi\mathrm{d}h$, thus $m+n=4$.

The regularity condition implies that
$\mathrm{d}h,\phi\mathrm{d}h,\psi\mathrm{d}h,\phi\psi\mathrm{d}h$
are holomorphic on $T^2-\{P\}$.
Hence their poles, if exist, must be located at the end $P$.
Since there exist no meromorphic functions of $\deg=1$ on $T^2$,
we conclude that neither of the $1$-forms above can have a simple pole at $P$. That means $n\ne 1, n-l\ne 1, m+n-l\ne 1$.

We conclude that, either $l=1, m=1, n=3$, or $l=m=n=2$.
Using the regularity condition that the zeros of
$\mathrm{d}h$ coincide with the poles of $\phi$ or $\psi$
with the same order, their divisors are derived as in the next proposition.
\begin{proposition}\label{prop-divisor}
There are two possibilities for the divisors of Weierstrass data $\{\phi,\psi,\mathrm{d}h\}$ on $T^2$ when we have normalized $\phi(P)=0,~ \psi(P)=\infty$:
$$
\text{Case}~1:(\phi)=P+C_1-V_1-V_2, (\psi)=-P-C_2+N_1+N_2,
(\mathrm{d}h)=-3P+V_1+V_2+C_2;
$$
$$
\!\!\!\!\!\!\!\!\!\!\text{Case}~2:(\phi)=2P-V_1-V_2, (\psi)=-2P+C_1+C_2, (\mathrm{d}h)=-2P+V_1+V_2.
$$
Here $C_1, C_2, V_1, V_2, N_1, N_2$ are points on $T^2-\{P\}$
\end{proposition}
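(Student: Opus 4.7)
The plan is to turn the assertion into pure divisor bookkeeping on the torus, using only the Jorge--Meeks formula and the regularity conditions of Theorem~\ref{thm-weierstrass}.

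First I would pin down the local behaviour of $\phi,\psi,\mathrm{d}h$ at the end, i.e.\ the triple $(l,m,n)$; this is essentially what the paragraphs preceding the proposition already carry out. From $-\int K\,\mathrm{d}M=8\pi$ and Theorem~\ref{thm-gaussbonnet} one gets $\deg\phi=\deg\psi=2$ and end multiplicity $d=3$, hence $l,m\in\{1,2\}$. Since $\phi(P)=0$, the leading pole of $X_z\,\mathrm{d}z$ at $P$ comes from $\psi\,\mathrm{d}h$, so $m+n=4$. Each of the four forms $\mathrm{d}h,\phi\,\mathrm{d}h,\psi\,\mathrm{d}h,\phi\psi\,\mathrm{d}h$ is meromorphic on $T^2$ with at most a single pole, located at $P$, and a torus admits no meromorphic function of degree one; therefore none of them may have a simple pole at $P$, which kills $n=1$, $n-l=1$, and $m+n-l=1$. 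Only $(l,m,n)=(1,1,3)$ and $(l,m,n)=(2,2,2)$ survive, matching Cases~1 and~2.

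Next I would reconstruct the full divisors case by case. For $\phi$ and $\psi$ this is immediate from $\deg\phi=\deg\psi=2$ together with their prescribed orders at $P$: one simply names the remaining zeros and poles $C_i, V_i, N_i$ as in the statement. The substantive step concerns $\mathrm{d}h$, whose zero divisor has total degree equal to the order of its pole at $P$ because $\deg(\mathrm{d}h)=0$ on a torus. Here condition~(2) of Theorem~\ref{thm-weierstrass} does all the work: every zero of $\mathrm{d}h$ must coincide, with matching multiplicity, with a pole of $\phi$ or of $\psi$. In Case~1 the available off-$P$ poles are the two simple poles $V_1,V_2$ of $\phi$ together with the simple pole $C_2$ of $\psi$, of combined multiplicity exactly $3$, so $(\mathrm{d}h)=-3P+V_1+V_2+C_2$ is forced. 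In Case~2 the only poles of $\psi$ sit at $P$ itself (which is excluded as a zero of $\mathrm{d}h$), so the two zeros of $\mathrm{d}h$ must absorb the two poles of $\phi$, giving $(\mathrm{d}h)=-2P+V_1+V_2$.

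I do not anticipate any serious obstacle here: the proposition is essentially a counting lemma once $(l,m,n)$ has been determined, and the arithmetic lines up exactly. The only mild point to keep in mind is to allow degenerations such as $V_1=V_2$ (with $\phi$ acquiring a double pole there), or coincidences among $C_i, V_i, N_i$, without their breaking the listed divisor identities. Since the statement is phrased in terms of formal divisors rather than distinct points, these degenerations are absorbed by the notation and need no separate treatment.
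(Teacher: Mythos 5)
Your proposal is correct and follows essentially the same route as the paper: the paper derives the proposition in the paragraphs immediately preceding its statement, using exactly the Jorge--Meeks count ($\deg\phi=\deg\psi=2$, $d=3$), the relation $m+n=4$, the nonexistence of degree-one meromorphic functions on a torus to exclude simple poles of the four $1$-forms at $P$, and regularity condition~(2) to pin down $(\mathrm{d}h)$. No gaps.
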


In the following we discuss these two cases separately.

\subsection{The Weierstrass data in Case~1}
\label{sub-case1}
Consider complete, oriented regular algebraic stationary surface defined on a torus whose Weierstrass data
have the following divisors:
\begin{equation}
\label{11}
(\phi)\!=\!P\!+\!C_1\!-\!V_1\!-\!V_2,
(\psi)\!=\!-\!P\!-\!C_2\!+\!N_1\!+\!N_2, (\mathrm{d}h)\!=\!-3P\!+\!V_1\!+\!V_2\!+\!C_2.~
\end{equation}
It follows
\begin{align}
&(\phi \mathrm{d}h)=-2P+C_1+C_2, \label{12}\\
&(\psi \mathrm{d}h)=-4P+V_1+V_2+N_1+N_2, \label{13}\\
&(\phi \psi \mathrm{d}h)=-3P+C_1+N_1+N_2. \label{14}
\end{align}

Given any global coordinate $u$ on the universal cover of $T^2$, we write out
\[
\phi \mathrm{d}h=f(u)\mathrm{d}u.
\]
It is a standard fact that the elliptic function $f(u)$ of degree $2$ and its derivative $f'(u)$ must satisfy a cubic equation,
which yields an embedding into $\mathbb{C}P^2$:
\[
T^2=\{[f(u),f'(u),1]\in \mathbb{C}P^2\mid f'(u)^2=\varrho(f(u)-e_1)(f(u)-e_2)(f(u)-e_3)\}
\]
where $\varrho$ is a complex constant,
$\{\infty,e_1,e_2,e_3\}$ are the four distinct branch points of $f(u)$.
Make a change of variables as below:
\[
x=\frac{f(u)-e_1}{e_2-e_1},~ y=\frac{f'(u)}{\sqrt{\varrho (e_2-e_1)^3}},~ z=\sqrt{\varrho(e_2-e_1)}\cdot u.
\]
Then the algebraic curve is in its standard form
\begin{align}
T^2=\{[x,y,1]\in \mathbb{C}P^2\mid y^2=x(x-1)(x-\lambda)\}, \label{35}
\end{align}
where $\lambda=(e_3-e_1)/(e_2-e_1)$ is the modular invariant
of this $T^2$, and the new complex coordinate $z$ satisfies
$\mathrm{d}z=\mathrm{d}x/y$ as in Section~\ref{sec-period}.

Next we express Weierstrass data $\{\phi,\psi,\mathrm{d}h\}$ by
elliptic functions $x$ and $y$. The 3rd-order pole of either $\mathrm{d}h$
or $\phi\psi\mathrm{d}h$ can always be canceled by a suitable
combination of $y\mathrm{d}z$ and $x\mathrm{d}z$, after which
there leaves only with a holomorphic $1$-form.
(This comes from the fact that these 1-forms have a unique pole at $P$,
and the residue at $P$ must vanish by the residue theorem.
Note that this also guarantees that the periods along a small
circle around $P$ always vanish.) Thus we can write
\begin{equation}\label{eq-x0}
\mathrm{d}h=(by+cx+e)\mathrm{d}z,~~
\phi\psi\mathrm{d}h=(ly+mx+n)\mathrm{d}z,
\end{equation}
where $b,c,e,l,m,n$ are complex parameters which can be chosen
arbitrarily except the restriction $b\ne 0, l\ne 0$. On the other hand, we can rewrite
\[
\phi\mathrm{d}h=f(u)\mathrm{d}u=a(x-x_0)\mathrm{d}z,
~~~a=\sqrt{\frac{e_2-e_1}{\varrho}},~x_0=\frac{-e_1}{e_2-e_1}.
\]
Compared with the divisor information given at the beginning of
this subsection, we see
\[
P=[0,1,0],~C_1=[x_0,-y_0,1],~C_2=[x_0,y_0,1],
\]
where $(x_0,\pm y_0)$ are two points on $y^2=x(x-1)(x-\lambda)$.
Because $C_1,C_2$ are the zeros of $\phi\psi\mathrm{d}h$ and $\mathrm{d}h$ separately, there should be
$e=-by_0-cx_0, n=ly_0-mx_0$.

Finally, note that according to Remark~\ref{rem-congruent},
we can always make a Lorentz transformation in $\mathbb{R}^4_1$
and a dilation so that the divisors of $\{\phi,\psi,\mathrm{d}h\}$ are
preserved, but the parameter $a$ is normalized to be $1$.
Thus without loss of generality we will assume $a=1$.
This normalization also fixes our immersion in $\mathbb{R}^4_1$
up to translations.

In summary, we can write out the Weierstrass data explicitly as below depending on 6 complex parameters $\{b,c,l,m,\lambda\}$ and $x_0$ (or $y_0$):
\begin{align}
\phi&=\frac{x-x_0}{b(y-y_0)+c(x-x_0)}, \label{15}\\
\psi&=\frac{l(y+y_0)+m(x-x_0)}{x-x_0}, \label{16}\\
\mathrm{d}h&=[b(y-y_0)+c(x-x_0)]\mathrm{d}z \thickapprox cx\mathrm{d}z-(cx_0+by_0)\mathrm{d}z, \label{17}\\
\phi \psi \mathrm{d}h&=[l(y+y_0)+m(x-x_0)]\mathrm{d}z \thickapprox mx\mathrm{d}z+(-mx_0+ly_0)\mathrm{d}z, \label{18}\\
\phi\mathrm{d}h&=(x-x_0) \mathrm{d}z, \label{19}\\
\psi\mathrm{d}h&=\frac{[l(y+y_0)+m(x-x_0)][b(y-y_0)+c(x-x_0)]}
{x-x_0} \mathrm{d}z. \label{20}
\end{align}
Here the symbol $\thickapprox$ means that both sides are equal up to adding an exact 1-form. In particular, by $y^2-y_0^2=(x-x_0)[x^2+xx_0+x_0^2-(\lambda+1)(x+x_0)+\lambda]$ and the third conclusion in Proposition~\ref{prop-legendre}, we know
\[
\psi\mathrm{d}h\thickapprox Ax\mathrm{d}z+B\mathrm{d}z,
\]
\begin{equation}\label{1A}
A=bl\left[x_0-\frac{\lambda+1}{3}\right]+cm,
\end{equation}
\begin{equation}\label{1B}
B=(cl-bm)y_0+bl\left[x_0^2-(\lambda+1)x_0+\frac{2\lambda}{3}\right]
-cmx_0.
\end{equation}
The data $\{\phi, \psi, \mathrm{d}h\}$ given in \eqref{15}$\sim$\eqref{20} represents a complete
algebraic stationary surface if, and only if, they satisfy
the regularity condition $\phi\neq\bar{\psi}$ and the period conditions.

Here the regularity condition amounts to
\begin{equation}\label{1reg}
|x-x_0|^2 \neq \overline{[b(y-y_0)+c(x-x_0) ]}[l(y+y_0)+m(x-x_0)],
\end{equation}
besides $[x_0,\pm y_0,1]$ ($=C_1,C_2$).

To write down the period conditions explicitly, recall
the notations introduced in Section~\ref{sec-period}:
\[
\omega=\mathrm{d}z,~\Phi=x\mathrm{d}z;\quad \text{periods:} ~\omega_i=\oint_{\gamma_i}\omega,~
\Phi_i=\oint_{\gamma_i}\Phi,
\]
where $\gamma_i (i=1,2)$ are two cycles generating
the first homology group of $T^2$. We have
\begin{align}
&\mathrm{Re}\oint_{\gamma_{i}} \mathrm{d}h=0 &~ \Longleftrightarrow \quad &\mathrm{Re}[c\Phi_i-(cx_0+by_0)\omega_i]=0, \label{period11}\\
&\mathrm{Re}\oint_{\gamma_{i}} \phi \psi \mathrm{d}h=0 &~ \Longleftrightarrow \quad &\mathrm{Re}[m\Phi_i-(mx_0-ly_0)\omega_i]=0, \label{period12}\\
&\overline{\oint_{\gamma_{i}}\phi \mathrm{d}h} = -\oint_{\gamma_{i}}\psi \mathrm{d}h &~ \Longleftrightarrow\quad &-\overline{\Phi_i}+\overline{x_0}\overline{\omega_i}=A\Phi_i+B\omega_i. \label{period13}
\end{align}
In general it is quite difficult to solve the period problem completely.
An exceptional case is when $x_0=0$; we will show that
the only solutions are the Lorentz deformations described in
Section~\ref{sec-deform}. This result will also be
used in Section~\ref{sec-unique} to obtain another uniqueness
theorem.

\begin{proposition}\label{prop-unique}
Suppose $X:T^2-\{P\} \to \mathbb{R}_1^4$ is a
complete regular algebraic stationary surface of
total curvature $-\int_M K \mathrm{d}M=8\pi$
defined on the elliptic curve $y^2=x(x-1)(x-\lambda)$,
whose Gauss map $\{\phi,\psi\}$ have order 1 at the end $P$.
It has Weierstrass data as given by \eqref{15}$\sim$\eqref{20}. Furthermore, if
\[x_0=0,\]
then $\lambda=-1$ (square torus), and this surface must be
a Lorentz deformation of the classical Chen-Gackstatter surface.
\end{proposition}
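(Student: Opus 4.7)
The plan is to exploit the substantial simplification produced by $x_0=0$ (which forces $y_0=0$, so that $C_1=C_2$ coincides with the branch point $[0,0,1]$), and then combine the vertical and horizontal period conditions \eqref{period11}--\eqref{period13} with Legendre's relation and Weber's period-quotient uniqueness (Theorem~\ref{thm-period}) to pin down every remaining parameter. Substituting $x_0=y_0=0$ into \eqref{15}--\eqref{20} and using that $y\,\mathrm{d}z=\mathrm{d}x$ has vanishing periods on $T^2$, the vertical period conditions collapse to the four real equations $\mathrm{Re}(c\,\Phi_i)=0$ and $\mathrm{Re}(m\,\Phi_i)=0$ for $i=1,2$, while the horizontal condition takes the form $\overline{\Phi_i}+A\,\Phi_i+B\,\omega_i=0$ with $A=cm-bl(\lambda+1)/3$ and $B=2bl\lambda/3$ coming from \eqref{1A}--\eqref{1B}. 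Solving this $2\times 2$ linear system for $B$ via the Legendre relation $\omega_1\Phi_2-\omega_2\Phi_1=8\pi\mathrm{i}$ yields the clean, real-valued formula $B=\mathrm{Im}(\Phi_1\bar\Phi_2)/(4\pi)$.

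The decisive dichotomy is then the following. Suppose, for contradiction, that $(c,m)\ne(0,0)$: the nonzero one of $\{c,m\}$, together with the corresponding pair of real vertical equations, forces $\Phi_1/\Phi_2\in\mathbb{R}$, hence $\mathrm{Im}(\Phi_1\bar\Phi_2)=0$ and $B=0$; since $\lambda\notin\{0,1,\infty\}$, this gives $bl=0$, contradicting $b,l\ne 0$. Hence $c=m=0$, and the horizontal period condition reduces to $\overline{\Phi_i}=-bl\,\tilde\Phi_i$, where $\tilde\Phi_i$ denotes the period of $y\,\mathrm{d}x/x=(x-1)(x-\lambda)\,\mathrm{d}x/y$ along $\gamma_i$---a rewriting made transparent by Proposition~\ref{prop-legendre}(3). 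Taking quotients over $i=1,2$ recovers precisely Weber's equation $\overline{\sigma(1,0,0;\lambda)}=\sigma(0,1,1;\lambda)$, so Theorem~\ref{thm-period} forces $\lambda=-1$.

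To finish the identification, with $\lambda=-1$ and $c=m=0$ the formulas \eqref{15}--\eqref{16} give $\phi\psi=l/b$ constant, so the converse part of Theorem~\ref{thm-deform} presents the surface as the Lorentz deformation of a minimal surface $X:M\to\mathbb{R}^3$ on the square torus whose Gauss map is proportional to $x/y$ and whose height differential is proportional to $\mathrm{d}x$---the defining data \eqref{eq-CG} of the classical Chen-Gackstatter surface. Reading off the common value of $bl$ from the remaining scalar period equation (using $\tilde\Phi_i=-\tfrac{2}{3}\omega_i$ at $\lambda=-1$) pins down $bl=-\rho^2$ with $\rho$ as in \eqref{eq-rho}, and the deformation parameter is then fixed by $\zeta^2=-b/l$. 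The main obstacle I foresee is purely bookkeeping: tracking signs under the orientation convention \eqref{eq-Phisign} so that the reduction to Weber's exact equation, and the Legendre-based evaluation of $B$, come out with the correct signs of the cycle orientations.
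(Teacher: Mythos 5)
Your proposal is correct and follows essentially the same route as the paper: reduce the period conditions at $x_0=y_0=0$, use the Legendre relation to force $c=m=0$ (you do this by solving for $B$ via Cramer's rule, the paper by multiplying by $\bar c$ and comparing coefficients — the same linear-independence fact), then identify the horizontal condition with Weber's period-quotient equation to get $\lambda=-1$, and finish with the converse part of Theorem~\ref{thm-deform}. Your explicit determination of $bl=-\rho^2$ at the end is a slightly more detailed justification of the paper's closing remark that the underlying $\mathbb{R}^3$ surface ``can only be the classical example.''
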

\begin{proof}
When $x_0=0, y_0=0$, the period conditions (\ref{period11})$\sim$(\ref{period13}) are simplified to
\begin{align}
&Re[c\Phi_i]=0, \label{period14}\\
&Re[m\Phi_i]=0, \label{period15}\\
&-\overline{\Phi_i}=\left[mc-\frac{bl}{3}(\lambda+1)\right]\Phi_i
+\frac{2bl}{3}\lambda \omega_i. \label{period16}
\end{align}
The Weierstrass data $\{\phi,\psi,\mathrm{d}h\}$ takes
the simple form as below:
\begin{equation}\label{01}
\phi=\frac{ax}{by+cx}~, \quad
\psi=\frac{ly+mx}{ax}~, \quad
\mathrm{d}h=(by+cx)\mathrm{d}z=(by+cx)\frac{\mathrm{d}x}{y}~.
\end{equation}
\indent We will first show that $c=m=0$. Otherwise, suppose $c \neq 0$. Multiply with $\bar{c}$ on both sides of \eqref{period16} and invoking \eqref{period11}, we obtain
\begin{equation}\label{02}
\left[mc-\frac{bl}{3}(\lambda+1)\right]\bar{c}\Phi_i
+\frac{2bl}{3}\lambda\cdot \bar{c}
\omega_i=-\bar{c}\overline{\Phi_i}= c\Phi_i,
\end{equation}
The Legendre relation (see Proposition~\ref{prop-legendre})
implies that the 2-vector $(\omega_1,\omega_2)$ are linearly independent to
$(\Phi_1,\Phi_2)$. So the coefficients of $\omega_i,\Phi_i$
in \eqref{02} must cancel with each other, i.e.,
\[
\left[mc-\frac{bl}{3}(\lambda+1)\right]\bar{c}-c=0, \quad
\frac{2bl}{3}\lambda=0.
\]
But the second equality contradicts with our assumption
$\lambda \ne 0, b\ne 0, l\ne 0$. For the same reason we know $m=0$.\\
\indent
We have shown that $\{\phi,\psi,\mathrm{d}h\}$ must take the following form:
\begin{equation}\label{03}
\phi=\frac{x}{by}, \quad \psi=\frac{ly}{x}, \quad \mathrm{d}h=b\mathrm{d}x.
\end{equation}
Using the notation in Section~\ref{sec-period},
\[
\oint_{\gamma_{i}}\phi \mathrm{d}h = \oint_{\gamma_{i}}\frac{x}{y}\mathrm{d}x =
\alpha_i(1,0,0;\lambda),
\]
\[\oint_{\gamma_{i}}\psi \mathrm{d}h = bl\oint_{\gamma_{i}}\frac{(x-1)(x-\lambda)}{y}\mathrm{d}x = bl\cdot\alpha_i(0,1,1;\lambda)
\]
The horizontal period condition implies $\overline{\sigma(1,0,0;\lambda)}=\sigma(0,1,1;\lambda)$.
Then by Weber's Theorem~\ref{thm-period} we know $\lambda=-1$.

Weierstrass data in \eqref{03} verifies $\phi\psi=l/b$, which is
a constant. By the second part of Theorem~\ref{thm-deform}, we know this surface must be the Lorentz deformation of a Chen-Gackstatter type surface in $\mathbb{R}^3$. The latter one
can only be the classical example. This completes our proof.
\end{proof}

\subsection{The Weierstrass data in Case 2}
\label{sub-case2}
In the second case mentioned in Proposition~\ref{prop-divisor}, the divisors of the Weierstrass data $\{\phi, \psi, \mathrm{d}h\}$ are
\begin{equation}\label{21}
(\phi)=2P-V_1-V_2, ~(\psi)=-2P+N_1+N_2, ~ (\mathrm{d}h)=-2P+V_1+V_2.
\end{equation}
Compared to Case~1, here we use $\mathrm{d}h$
instead of $\phi\mathrm{d}h$ to give the parametrization of
this torus in almost the same way:
\[
T^2=\big\{[x,y,1]\in \mathbb{C}P^2 \big| y^2=x(x-1)(x-\lambda)\big\}.
\]
Then we can still use explicit formulas to express the
Weierstrass data in terms of $x,y$ and holomorphic 1-form $\mathrm{d}z=\mathrm{d}x/y$:
\begin{align}
&\mathrm{d}h=a(x-x_0)\mathrm{d}z, \quad \phi \mathrm{d}h=e\mathrm{d}z, \quad \phi \psi \mathrm{d}h=(bx+c)\mathrm{d}z, \label{22}\\
&\Rightarrow ~~~\psi \mathrm{d}h=\frac{a}{e}(bx+c)(x-x_0)\mathrm{d}z, \label{23}\\
&\Rightarrow ~~~\phi=\frac{e}{a(x-x_0)}~, \quad \psi=\frac{bx+c}{e}, \label{24}
\end{align}
where $a,b,c,e,x_0$ are all complex parameters with $a,b,e \neq 0$.

By Remark~\ref{rem-congruent}, we can still make the
normalization $e=1$ without loss of generality.
This condition, together with another normalization $|a|=1$,
fix our immersion in $\mathbb{R}^4_1$ up to translations.

The period conditions can be written down as below:
\begin{align}
&\mathrm{Re}\oint_{\gamma_{i}} \mathrm{d}h=0  & \Longleftrightarrow \quad &\mathrm{Re}[a\Phi_i-ax_0\omega_i]=0, \label{25}\\
&\mathrm{Re}\oint_{\gamma_{i}} \phi \psi \mathrm{d}h=0  & \Longleftrightarrow \quad &\mathrm{Re}[b\Phi_i+c\omega_i]=0, \label{26}\\
&\overline{\oint_{\gamma_{i}}\phi \mathrm{d}h} = -\oint_{\gamma_{i}}\psi \mathrm{d}h & \Longleftrightarrow \quad &-\frac{1}{a}\overline{\omega_i}
=\left[\frac{2b}{3}(\lambda+1)+c-bx_0\right]\Phi_i -
\left[\frac{b\lambda}{3}+cx_0\right]\omega_i. \label{27}
\end{align}
Note that we use $\frac{x^2\mathrm{d}x}{y}\thickapprox \frac{2(\lambda+1)}{3}\Phi-\frac{\lambda}{3}\omega$ once again to obtain the last equation.

\section{A 4-parameter family of deformations}
\label{sec-exist}

In this section, we will prove the existence of 4-parameter family of deformations of the classical Chen-Gackstatter surface. They include
the Lorentz deformations mentioned in Proposition~\ref{prop-deform} and
Proposition~\ref{prop-unique} as a subfamily. In particular this shows
that the uniqueness result in Proposition~\ref{prop-unique} is
generally not true when $x_0\ne 0$.

Consider the integration of $X_z\mathrm{d}z$ using the Weierstrass data \eqref{15}$\sim$\eqref{20} in Case~1 and the representation formula \eqref{eq-weierstrass} along $\gamma_1,\gamma_2$. The result is a vector $\mathcal{P}$ consisting of the $8$ real periods, depending on six complex parameters
\[
v=(b,c,l,m,y_0,\lambda)\in \mathbb{C}^6,~~b,l\ne 0;~\lambda\ne 0,1.
\]
The period conditions amount to say that $\mathcal{P}=\vec{0}\in\mathbb{R}^8$.

As well-known it is difficult to solve these period conditions directly. Alternatively, we view $\mathcal{P}$ as a mapping from $v\in\mathbb{C}^6=\mathbb{R}^{12}$ to $\mathbb{R}^8$, called the \emph{period mapping}, and apply the preimage theorem.
Note that the set of preimages of $\vec{0}$ is non-empty, since when
\[
v^*=(b,c,l,m,y_0,\lambda)=(\rho,0,-\rho,0,0,-1),
~~\rho=\frac{\sqrt{6}\Gamma(\frac{3}{4})}{\Gamma(\frac{1}{4})}
\approx 0.8279,
\]
the period condition is satisfied ($\mathcal{P}(v^*)=\vec{0}$) and
the example is the classical Chen-Gackstatter surface.
If only we can show that the Jacobian matrix of this period mapping at $v^*$ is non-degenerate, the preimage theorem will guarantee the desired
existence result.

Write out the real vector
\[
v=(b_1,b_2,c_1,c_2,l_1,l_2,m_1,m_2,y^1_0,y^2_0,\lambda_1,\lambda_2)
\in \mathbb{R}^{12},
\]
where $b=b_1+\mathrm{i}b_2,\cdots,\lambda=\lambda_1+\mathrm{i}\lambda_2$.
According to Subsection~\ref{sub-case1},
the period mapping
\[
\mathcal{P}:\mathbb{R}^{12}\to \mathbb{R}^{8}; ~~v\mapsto \mathcal{P}(v),
\]
is written explicitly as
\[
\mathcal{P}(v)=
\begin{pmatrix}
\mathrm{Re}[c\Phi_1-(cx_0+by_0)\omega_1]\\
\mathrm{Re}[c\Phi_2-(cx_0+by_0)\omega_2]\\
\mathrm{Re}[m\Phi_1-(mx_0-ly_0)\omega_1]\\
\mathrm{Re}[m\Phi_2-(mx_0-ly_0)\omega_2]\\
\mathrm{Re}[A\Phi_1+B\omega_1+\overline{\Phi_1}-\overline{x_0 \omega_1}]\\
\mathrm{Re}[A\Phi_2+B\omega_2+\overline{\Phi_2}-\overline{x_0 \omega_2}]\\
\mathrm{Im}[A\Phi_1+B\omega_1+\overline{\Phi_1}-\overline{x_0 \omega_1}]\\
\mathrm{Im}[A\Phi_2+B\omega_2+\overline{\Phi_2}-\overline{x_0 \omega_2}]
\end{pmatrix}
\]
where we recall that
\begin{align*}
&\omega_i=\oint_{\gamma_i} \mathrm{d}x/y,
~~~\Phi_i=\oint_{\gamma_i} x\mathrm{d}x/y;\\
&A ~=~ bl\left(-\frac{\lambda+1}{3}+x_0\right)+cm, \\
&B ~=~ y_0(cl-bm)+bl\left(x_0^2-(\lambda+1)x_0
+\frac{2\lambda}{3}\right)-cm x_0.
\end{align*}

Consider the property of $\mathcal{P}$ near $v^*=(\rho,0,-\rho,0,0,-1)$. Observe that $x_0$ is an implicit (differentiable) function $x_0=x_0(\lambda, y_0)$
near $(x_0,y_0,\lambda)=(0,0,-1)$ using the equation
\begin{equation}\label{61}
f(x_0,y_0,\lambda)= y_0^2-x_0(x_0-1)(x_0-\lambda)=0.
\end{equation}
This follows from $\frac{\partial f}{\partial x_0}(0,0,-1)=1\ne 0.$
Moreover, $\frac{\partial f}{\partial y}=0=\frac{\partial f}{\partial \lambda}$ at $(0,0,-1)$. So
\begin{equation}\label{62}
\frac{\partial x_0}{\partial \lambda}=\frac{\partial x_0}{\partial y_0}=0,~~~\text{when}~ y_0=0, \lambda=-1.
\end{equation}

It is straightforward (yet a little bit tedious) to compute out the partial derivatives of $\mathcal{P}(v)$ at $v^*$. Denote the
corresponding components as $v_i, \mathcal{P}_j, 1\le i\le 12, 1\le j\le 8.$ The Jacobian matrix is
\begin{equation*}
\left(\frac{\partial\mathcal{P}_j}{\partial v_i}\right)_{v^*}=2
\begin{pmatrix}\begin{smallmatrix}
0&0&0&0&\frac{2\rho}{3}\mathrm{Re}\omega_1
&\frac{2\rho}{3}\mathrm{Re}\omega_2
&\frac{2\rho}{3}\mathrm{Im}\omega_1
&\frac{2\rho}{3}\mathrm{Im}\omega_2\\
0&0&0&0&\frac{-2\rho}{3}\mathrm{Im}\omega_1
&\frac{-2\rho}{3}\mathrm{Im}\omega_2
&\frac{2\rho}{3}\mathrm{Re}\omega_1
&\frac{2\rho}{3}\mathrm{Re}\omega_2\\
\mathrm{Re}\Phi_1&\mathrm{Re}\Phi_2&0&0&0&0&0&0\\
-\mathrm{Im}\Phi_1&-\mathrm{Im}\Phi_2&0&0&0&0&0&0\\
0&0&0&0&\frac{-2\rho}{3}\mathrm{Re}\omega_1
&\frac{-2\rho}{3}\mathrm{Re}\omega_2
&\frac{-2\rho}{3}\mathrm{Im}\omega_1
&\frac{-2\rho}{3}\mathrm{Im}\omega_2\\
0&0&0&0&\frac{2\rho}{3}\mathrm{Im}\omega_1
&\frac{2\rho}{3}\mathrm{Im}\omega_2
&\frac{-2\rho}{3}\mathrm{Re}\omega_1
&\frac{-2\rho}{3}\mathrm{Re}\omega_2\\
0&0&\mathrm{Re}\Phi_1&\mathrm{Re}\Phi_2&0&0&0&0\\
0&0&-\mathrm{Im}\Phi_1&-\mathrm{Im}\Phi_2&0&0&0&0\\
-\rho \mathrm{Re}\omega_1&-\rho \mathrm{Re}\omega_2
&-\rho \mathrm{Re}\omega_1&-\rho \mathrm{Re}\omega_2&0&0&0&0\\
\rho \mathrm{Im}\omega_1&\rho \mathrm{Im}\omega_2
&\rho \mathrm{Im}\omega_1&\rho \mathrm{Im}\omega_2&0&0&0&0\\
0&0&0&0&C_1+e_1&C_2+e_2&-D_1-f_1&-D_2-f_2\\
0&0&0&0&D_1-f_1&D_2-f_2&C_1-e_1&C_2-e_2
\end{smallmatrix}\end{pmatrix}~.
\end{equation*}
Here
\begin{equation}\label{-0}
\begin{aligned}
&C_1=\frac{\rho^2}{3}\mathrm{Re}\Phi_1
-\frac{2\rho^2}{3}\mathrm{Re}\omega_1
+\frac{2\rho^2}{3}\mathrm{Re}
\frac{\partial \omega_1}{\partial \lambda}~, ~
&&e_1=\mathrm{Re}\frac{\partial \Phi_1}{\partial \lambda}~,\\
&C_2=\frac{\rho^2}{3}\mathrm{Re}\Phi_2
-\frac{2\rho^2}{3}\mathrm{Re}\omega_2
+\frac{2\rho^2}{3}\mathrm{Re}
\frac{\partial \omega_2}{\partial \lambda}~, ~
&&e_2=\mathrm{Re}\frac{\partial \Phi_2}{\partial \lambda},\\
&D_1=-\frac{\rho^2}{3}\mathrm{Im}\Phi_1
+\frac{2\rho^2}{3}\mathrm{Im}\omega_1
-\frac{2\rho^2}{3}\mathrm{Im}
\frac{\partial \omega_1}{\partial \lambda}~, ~
&&f_1=\mathrm{Im}\frac{\partial \Phi_1}{\partial \lambda}~,\\
&D_2=-\frac{\rho^2}{3}\mathrm{Im}\Phi_2
+\frac{2\rho^2}{3}\mathrm{Im}\omega_2
-\frac{2\rho^2}{3}\mathrm{Im}
\frac{\partial \omega_2}{\partial \lambda}~, ~
&&f_2=\mathrm{Im}\frac{\partial \Phi_2}{\partial \lambda}~.
\end{aligned}
\end{equation}
These $\omega_i, \Phi_i, \frac{\partial \omega_i}{\partial \lambda},
\frac{\partial \Phi_i}{\partial \lambda}$
take their values at $\lambda=-1$.
To evaluate them, according to the conventions
in Section~\ref{sec-period}, when $\lambda <0$
one can write out explicitly
\begin{equation}\label{-1}
\begin{aligned}
&\omega_1(\lambda)=2\int_{\lambda}^{0}
\sqrt{\frac{1}{x(x-1)(x-\lambda)}}\mathrm{d}x
=2 \int_{0}^{1}\sqrt{\frac{1}{t(1-t)(1-t\lambda)}}\mathrm{d}t, \\
&\Phi_1(\lambda)=2\int_{\lambda}^{0}
\sqrt{\frac{x}{(x-1)(x-\lambda)}}\mathrm{d}x
=2\lambda\int_{0}^{1}\sqrt{\frac{t}{(1-t)(1-t\lambda)}}\mathrm{d}t,\\
&\omega_2(\lambda)=2\mathrm{i}
\int_{0}^{1}\sqrt{\frac{1}{x(1-x)(x-\lambda)}}\mathrm{d}x,   \\ &\Phi_2(\lambda)=2\mathrm{i}
\int_{0}^{1}\sqrt{\frac{x}{(1-x)(x-\lambda)}}\mathrm{d}x.
\end{aligned}
\end{equation}
Here we evaluate for real variables, and the square roots
always take non-negative values, which is chosen to be
compatible with the conventions in Section~\ref{sec-period}.

Next, when $\lambda$ take values in negative real numbers,
\begin{equation}\label{-2}
\begin{aligned}
&\frac{\partial\omega_1(\lambda)}{\partial\lambda}
=\int_{0}^{1}\sqrt{\frac{1}{t(1-t)(1-t\lambda)}}
\cdot\frac{t}{1-t\lambda}\mathrm{d}t, \\
&\frac{\partial\Phi_1(\lambda)}{\partial\lambda}
=\int_{0}^{1}\sqrt{\frac{t}{(1-t)(1-t\lambda)}}
\cdot\left(2+\frac{\lambda t}{1-t\lambda}\right)\mathrm{d}t,\\
&\frac{\partial\omega_2(\lambda)}{\partial\lambda}
=\mathrm{i}\int_{0}^{1}\sqrt{\frac{1}{x(1-x)(x-\lambda)}}
\cdot\frac{1}{x-\lambda}\mathrm{d}x,\\
&\frac{\partial\Phi_2(\lambda)}{\partial\lambda}
=\mathrm{i}\int_{0}^{1}\sqrt{\frac{x}{(1-x)(x-\lambda)}}
\cdot\frac{1}{x-\lambda}\mathrm{d}x.
\end{aligned}
\end{equation}
In particular, when $\lambda=-1$,
$\mathrm{Im}\omega_1=\mathrm{Im}\Phi_1=0,$
$\mathrm{Re}\omega_2=\mathrm{Re}\Phi_2=0.$ So $D_1=C_2=0$,
and the Jacobian matrix above is simplified to
\begin{align*}
\left(\frac{\partial\mathcal{P}_j}{\partial v_i}\right)_{v^*}=2
\begin{pmatrix}\begin{smallmatrix}
0&0&0&0&\frac{2\rho}{3}\mathrm{Re}\omega_1&0&0&\frac{2\rho}{3}\mathrm{Im}\omega_2\\
0&0&0&0&0&\frac{-2\rho}{3}\mathrm{Im}\omega_2&\frac{2\rho}{3}\mathrm{Re}\omega_1&0\\
\mathrm{Re}\Phi_1&0&0&0&0&0&0&0\\
0&-\mathrm{Im}\Phi_2&0&0&0&0&0&0\\
0&0&0&0&\frac{-2\rho}{3}\mathrm{Re}\omega_1&0&0&\frac{-2\rho}{3}\mathrm{Im}\omega_2\\
0&0&0&0&0&\frac{2\rho}{3}\mathrm{Im}\omega_2&\frac{-2\rho}{3}\mathrm{Re}\omega_1&0\\
0&0&\mathrm{Re}\Phi_1&0&0&0&0&0\\
0&0&0&-\mathrm{Im}\Phi_2&0&0&0&0\\
-\rho \mathrm{Re}\omega_1&0&-\rho \mathrm{Re}\omega_1&0&0&0&0&0\\
0&\rho \mathrm{Im}\omega_2&0&\rho \mathrm{Im}\omega_2&0&0&0&0\\
0&0&0&0&C_1+e_1&0&0&-D_2-f_2\\
0&0&0&0&0&D_2-f_2&C_1-e_1&0
\end{smallmatrix}\end{pmatrix}~.
\end{align*}
\textbf{Claim:} $\mathrm{rank}\left(\frac{\partial\mathcal{P}_j}{\partial v_i}\right)_{v^*}=8.$\\

To verify this claim, we need only to show
\begin{align*}
&\det\begin{pmatrix}
\frac{2\rho}{3}\mathrm{Re}\omega_1&0&0&\frac{2\rho}{3}\mathrm{Im}\omega_2\\
0&\frac{-2\rho}{3}\mathrm{Im}\omega_2&\frac{2\rho}{3}\mathrm{Re}\omega_1&0\\
C_1+e_1&0&0&-D_2-f_2\\
0&D_2-f_2&C_1-e_1&0\end{pmatrix}\\ =&-\left(\frac{2\rho}{3}\right)^2
\cdot(\mathrm{Re}\omega_1)^2\cdot\big[(C_1+D_2)^2-(e_1+f_1)^2\big]\ne 0.
\end{align*}
When $\lambda=-1$, By \eqref{-0}\eqref{-2}
they are expressed in terms of elliptic integrals,
\begin{align*}
-\Phi_1=\frac{2\rho^2}{3}\omega_1,~~\text{i.e.,}~
&\int_0^1\sqrt{\frac{t}{(1-t^2)}}\mathrm{d}t
=\frac{2\rho^2}{3}
\int_0^1\sqrt{\frac{1}{t(1-t^2)}}\mathrm{d}t;\\
C_1+D_2=-2\frac{\rho^2}{3}&\int_0^1
\sqrt{\frac{1}{t(1-t^2)}}\left(2t+\frac{1-t}{1+t}\right)
\mathrm{d}t,\\
e_1+f_2=&\int_0^1\sqrt{\frac{1}{t(1-t^2)}}
\left(2t+\frac{1-t}{1+t}t\right)\mathrm{d}t.
\end{align*}
We now prove $|C_1+D_2|<|e_1+f_2|$. This follows from
\begin{multline*}
\int_{0}^{1}\sqrt{\frac{t}{1-t^2}}\mathrm{d}t
\cdot\int_{0}^{1}\sqrt{\frac{1}{t(1-t^2)}}
\left(2t+\frac{1-t}{1+t}\right)\mathrm{d}t < \\
\qquad\qquad\qquad \int_{0}^{1}\sqrt{\frac{1}{t(1-t^2)}}\mathrm{d}t \cdot\int_0^1\sqrt{\frac{1}{t(1-t^2)}}
\left(2t+\frac{1-t}{1+t}t\right)\mathrm{d}t.
\end{multline*}
In fact, the left hand side is equal to the double integral
\[
\iint_{[0,1]\times[0,1]}\sqrt{\frac{1}{t(1-t^2)}}
\sqrt{\frac{1}{x(1-x^2)}}\left(2x+\frac{1-x}{1+x}\right)t \mathrm{d}t\mathrm{d}x;
\]
the right hand side is equal to the double integral
\[
\iint_{[0,1]\times[0,1]}\sqrt{\frac{1}{x(1-x^2)}}
\sqrt{\frac{1}{t(1-t^2)}}\left(2+\frac{1-t}{1+t}\right)t
\mathrm{d}t\mathrm{d}x.
\]
When $(x, t)\in [0,1]\times[0,1]$,
\[
1\leq 2x+\frac{1-x}{1+x} \leq 2 \leq 2+\frac{1-t}{1+t}\leq 3.
\]
So we obtain the desired inequality $|C_1+D_2|<|e_1+f_2|$.
The claim follows. This shows that around $v^*$, the preimage
of $\vec{0}$ is locally a 4-dimensional submanifold.

\begin{theorem}\label{thm-exist}
There exists a (real) 4-parameter family of deformations of
the Chen-Gackstatter surface which still has genus one,
a unique end, with total curvature $8\pi$, and
the Gauss maps $\phi,\psi$ both have order $1$ at the end.
\end{theorem}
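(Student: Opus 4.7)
The plan is to set up the period map on the full 6-complex-parameter family of Weierstrass data from Case~1, recognize the classical Chen-Gackstatter surface as one distinguished solution, and apply the preimage theorem after verifying the Jacobian has full rank there.

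First I would view the data \eqref{15}--\eqref{20} as depending on the real vector
\[
v=(b_1,b_2,c_1,c_2,l_1,l_2,m_1,m_2,y_0^1,y_0^2,\lambda_1,\lambda_2)\in\mathbb{R}^{12},
\]
with $x_0=x_0(\lambda,y_0)$ determined implicitly by $y_0^2=x_0(x_0-1)(x_0-\lambda)$ near $(0,0,-1)$, so that by \eqref{62} the partial derivatives $\partial x_0/\partial\lambda$ and $\partial x_0/\partial y_0$ vanish at the base point. The eight real period equations \eqref{period11}--\eqref{period13} assemble into a smooth map
\[
\mathcal{P}:\mathbb{R}^{12}\longrightarrow\mathbb{R}^{8}.
\]
The classical Chen-Gackstatter surface in $\mathbb{R}^3$ corresponds to the point $v^{*}=(\rho,0,-\rho,0,0,-1)$ with $\rho$ as in \eqref{eq-rho}, so $\mathcal{P}(v^{*})=\vec 0$, and the regularity condition $\phi\neq\bar\psi$ is satisfied there.

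Next I would compute $(\partial\mathcal{P}_j/\partial v_i)$ at $v^{*}$. Using the vanishing of $\partial x_0/\partial\lambda$, $\partial x_0/\partial y_0$, together with the symmetry properties $\mathrm{Im}\,\omega_1(-1)=\mathrm{Im}\,\Phi_1(-1)=0$ and $\mathrm{Re}\,\omega_2(-1)=\mathrm{Re}\,\Phi_2(-1)=0$ (coming from the sign conventions \eqref{eq-Phisign}), most entries of the $8\times 12$ Jacobian vanish. The rank question then reduces to showing that a certain $4\times 4$ minor built from $\omega_1,\Phi_1,\omega_2,\Phi_2$ and the $\lambda$-derivatives $\partial_\lambda\omega_i,\partial_\lambda\Phi_i$ at $\lambda=-1$ is nonsingular, which up to a nonzero prefactor amounts to proving the elliptic-integral inequality
\[
|C_1+D_2|<|e_1+f_2|
\]
with $C_1,D_2,e_1,f_2$ as in \eqref{-0}. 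I would then represent both sides as double integrals on $[0,1]\times[0,1]$ against the common positive density $\sqrt{1/[t(1-t^2)]}\sqrt{1/[x(1-x^2)]}$, extract the pointwise comparison of the two weight functions (the bound $2x+\frac{1-x}{1+x}\le 2+\frac{1-t}{1+t}$ for $(x,t)\in[0,1]^2$, which holds because both sides lie in $[1,3]$ with the smaller always $\le 2\le$ the larger), and conclude the inequality.

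With the rank statement established, the preimage theorem says that $\mathcal{P}^{-1}(\vec 0)$ is a smooth $4$-dimensional submanifold of $\mathbb{R}^{12}$ in a neighborhood of $v^{*}$. On a sufficiently small such neighborhood, the regularity condition \eqref{1reg} persists by continuity (since it holds strictly at $v^{*}$ and only needs to be checked away from the two isolated points $C_1,C_2$ where the relevant quantities vanish for trivial reasons), so every point of this submanifold yields a complete algebraic stationary surface of the desired type, with Gauss maps $\phi,\psi$ of order $1$ at the end by construction. The Jorge-Meeks formula \eqref{eq-jorgemeeks} then forces total curvature $8\pi$, completing the proof. The main obstacle is the analytic verification of the elliptic-integral inequality; everything else is the standard preimage-theorem machinery once the Weierstrass data from Section~\ref{sub-case1} has been parametrized correctly.
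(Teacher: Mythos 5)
Your proposal is correct and follows essentially the same route as the paper: the same period mapping $\mathcal{P}:\mathbb{R}^{12}\to\mathbb{R}^{8}$, the same reduction of the Jacobian at $v^{*}$ to a $4\times 4$ minor controlled by the elliptic-integral inequality $|C_1+D_2|<|e_1+f_2|$, the same double-integral comparison, and the same appeal to the preimage theorem. The only place where the paper is more careful is the persistence of $\phi\ne\bar\psi$: it builds a continuous map $\Xi:T^2\times U\to\widehat{\mathbb{C}}\times\widehat{\mathbb{C}}$ (checking well-definedness at $[x_0,\pm y_0,1]$ where numerator and denominator both vanish) and uses the spherical distance together with compactness of $T^2$, which is the content behind your phrase ``persists by continuity.''
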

\begin{proof}
According to the discussion of Case~1 in
Subsection~\ref{sub-case1} and the claim above,
there are a real 4-parameter family solutions to
the period conditions \eqref{period11}$\sim$\eqref{period13}
near $v^*$. They are obviously deformations of the
Chen-Gackstatter surface. We are left to verify the regularity condition when $v=(b,c,l,m,y_0,\lambda)$ is sufficiently close to
$v^*=(\rho,0,-\rho,0,0,-1)$.

When $v=v^*$, the corresponding surface is
the regular Chen-Gackstatter surface in $\mathbb{R}^3$.
We expect that under a small perturbation, the inequality
$\phi\ne\bar\psi$ still holds true. This is proved as below.

Taking $\lambda\in\hat{\mathbb{C}}-\{0,1,\infty\}$; in particular we choose $\lambda$ sufficiently close to $-1$.
Recall that on the elliptic curve
\begin{equation}\label{T1}
T_{\lambda} =\{[x,y,1]\in \mathbb{C}P^2
\mid y^2=x(x-1)(x-\lambda )\}\cup\{P=[0,1,0]\},
\end{equation}
the Gauss maps $\phi,\psi$ are given by \eqref{15}\eqref{16}:
\begin{equation}\label{T2}
\begin{aligned}
&\phi=\phi([x,y,1],v)=\frac{x-x_0}{b(y-y_0)+c(x-x_0)},\\
&\psi=\psi([x,y,1],v)=\frac{l(y+y_0)+m(x-x_0)}{x-x_0},
\end{aligned}
\end{equation}
where $x_0$ is an implicit function $x_0=x_0(\lambda, y_0)$
coming from \eqref{61},i.e.:
\begin{equation}\label{T3}
y_0^2=x_0(x_0-1)(x_0-\lambda ).
\end{equation}
Let $U$ be a neighborhood of $v^*$ in $\mathbb{C}^6$
on which $x_0=x_0(\lambda, y_0)$ is defined and differentiable.
We also identify the union of the 1-parameter family of tori $T^2_{\lambda}$
with the Cartesian product $T^2\times (-1-\delta,-1+\delta)$, using a topological trivialization which might be chosen quite
arbitrarily.
One expects that the formulas \eqref{T2} can define a continuous mapping
\[
\Xi:T^2\times U \rightarrow \widehat{\mathbb{C}}\times\widehat{\mathbb{C}},\qquad
([x,y,1],v)\mapsto(\phi,\bar\psi).
\]
At $P$ there is no trouble, because $\phi,\psi$
tends to $0$ and $\infty$ respectively as we know. Yet one should take care
when $(x,y)\to(x_0,\pm y_0)$. Observe
\[\phi=\frac{1}{b\frac{y-y_0}{x-x_0}+c}~,
\]
and $(b,c)$ is close to $(\rho,0)$. Using \eqref{T1}\eqref{T3} we derive
\[
y^2-y_0^2=(x-x_0)(x^2+xx_0+x_0^2-(\lambda+1)(x+x_0)+\lambda).
\]
Thus when $(x,y)\to(x_0,y_0)$, we have
\begin{displaymath}
\frac{y-y_0}{x-x_0}
=\left\{
\begin{array}{ll}
\frac{3x_0^2-2(\lambda+1)x_0+\lambda}{2y_0}
~~~&\text{when}~y_0\ne 0;\\
\frac{x^2-(\lambda+1)x+\lambda}{y}\to \infty,
~~~&\text{when}~y_0=0,\lambda\thicksim -1.
\end{array}
\right.
\end{displaymath}
From this we know $\phi$, as well as $\Xi$, is
well-defined at $[x,y,1]=[x_0,y_0,1]$. At $[x_0,-y_0,1]$ it is similar. In conclusion, $\Xi$ is well-defined and continuous.

Next we identify $\widehat{\mathbb{C}}$ with the unit sphere $S^2$ via the usual stereographic projection and consider
the standard distance function
\[
d:\widehat{\mathbb{C}}\times\widehat{\mathbb{C}}
\longrightarrow \mathbb{R}.
\]
Obviously, the composition
\[
\mathrm{dist}\triangleq d\circ\Xi: T^2\times U \longrightarrow \mathbb{R},
\]
is a continuous function on $T^2\times U\subset
\mathbb{C}P^2\times \mathbb{C}^6$.
Moreover, it is uniform continuous with respect to
the variable $[x,y,1]\in T^2$.

The final observation is
\[
\mathrm{dist}\mid_{T^2\times \{v^*\}}=\pi/2,
\]
because in that case $\phi=-1/\psi$, and $\{\phi,\bar\psi\}$ always
form a pair of antipodal points on $S^2=\widehat{\mathbb{C}}$.
Thus there exists a smaller neighborhood $V$ of $v^*$ such that \[
\mathrm{dist}\mid_{T^2\times V}>0.
\]
This implies, when $(b,c,l,m,y_0,\lambda)\in V$,
there is always $\phi\ne\bar\psi$ on the whole torus $T^2\subset\mathbb{C}P^2$. Hence the small deformation near $v^*$ yields complete immersed stationary surface as we desired.
\end{proof}

\section{A uniqueness result when there are many symmetries}
\label{sec-unique}

This section is devoted to the proof of a uniqueness result when the surface
in consideration has many symmetries. Here, a symmetry of a surface
$X:M\to\mathbb{R}^4_1$ means a Lorentz transformation $\mathbbm{g}$ preserving the non-parameterized surface, i.e.,
\[
g\in \mathrm{Isom}(\mathbb{R}^4_1),~~g(X(M))=X(M).
\]
Note that we have shown in Section~\ref{sec-deform} that
the Lorentz deformation preserves the symmetry group of
the original Chen-Gackstatter surface.

\begin{theorem}\label{thm-unique}
Suppose $X:M=T^2-\{P\} \to \mathbb{R}_1^4$ is a
complete regular algebraic stationary surface of
total curvature $-\int_M K \mathrm{d}M=8\pi$.
If it has a symmetry group $\mathbbm{G}$ with more than 4 elements, then this surface must be the classical Chen-Gackstatter surface in $\mathbb{R}^3$ or one of its Lorentz deformations.
\end{theorem}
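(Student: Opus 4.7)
Because the end $P\in T^2$ is the only puncture, every symmetry $g\in\mathbbm{G}$ must preserve $\{P\}$, so it descends to a (holomorphic or anti-holomorphic) automorphism of the pointed Riemann surface $(T^2,P)$. The orientation-preserving automorphisms of a torus fixing a marked point form a cyclic group of order $2$ (generic torus), $4$ (square torus) or $6$ (equilateral torus), and including anti-holomorphic involutions can at most double the order. Hence $|\mathbbm{G}|>4$ already forces the conformal structure of $T^2$ to be equivalent either to the square torus (so $\lambda=-1$) or to the equilateral torus.

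I would then separate according to the order of $\{\phi,\psi\}$ at $P$, as in Proposition~\ref{prop-divisor}. In \emph{Case~1}, take the explicit Weierstrass data \eqref{15}--\eqref{20} parametrised by $(b,c,l,m,y_0,\lambda)$ and $x_0$ satisfying $y_0^2=x_0(x_0-1)(x_0-\lambda)$. The two special points $C_1=[x_0,-y_0,1]$ and $C_2=[x_0,y_0,1]$ are canonically determined as zeros of $\phi\psi\,\mathrm{d}h$ and of $\mathrm{d}h$. On the square torus the extra order-$4$ automorphism $z\mapsto\mathrm{i}z$ acts as $(x,y)\mapsto(-x,\mathrm{i}y)$, and compatibility with the hyperelliptic involution $y\mapsto -y$ forces the unordered pair $\{C_1,C_2\}$ to be mapped to itself, which immediately gives $x_0=0$. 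Proposition~\ref{prop-unique} then concludes that the surface is a Lorentz deformation of the classical Chen-Gackstatter surface. On the equilateral torus a parallel argument with the extra automorphism $z\mapsto\omega z$, $\omega=\mathrm{e}^{\mathrm{i}\pi/3}$, would again force $x_0$ to be a distinguished point, but now Proposition~\ref{prop-unique} (whose conclusion includes $\lambda=-1$) shows no such surface exists, ruling the equilateral torus out in Case~1.

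In \emph{Case~2}, I would run the same kind of analysis on the Weierstrass data \eqref{22}--\eqref{24} with divisors $(\phi)=2P-V_1-V_2$, $(\psi)=-2P+C_1+C_2$, $(\mathrm{d}h)=-2P+V_1+V_2$. Any extra rotational symmetry must permute the unordered pairs $\{V_1,V_2\}$ and $\{C_1,C_2\}$. On the square torus, the action $(x,y)\mapsto(-x,\mathrm{i}y)$ of $z\mapsto\mathrm{i}z$ forces $x_0$ to be a branch point, and combined with the normalisations $e=1$, $|a|=1$ from Subsection~\ref{sub-case2} this collapses the period equations \eqref{25}--\eqref{27} to a small linear system; after eliminating $\omega_i,\Phi_i$ via the Legendre relation (Proposition~\ref{prop-legendre}) as in the proof of Proposition~\ref{prop-unique}, the remaining constraints either contradict $b,e\ne 0$ or force $\phi=\bar\psi$ at some point, violating regularity. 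A strictly analogous check eliminates the equilateral torus in Case~2.

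The main obstacle will be \emph{Case~2}, since unlike Case~1 we have no prepackaged Proposition~\ref{prop-unique} at our disposal; the periods in \eqref{27} couple $\omega_i$, $\Phi_i$, $\overline{\omega_i}$ through several parameters simultaneously, so the argument must carefully exploit the symmetry-induced discrete choice of $x_0$ to make the period system tractable, and then use the Legendre relation to close it up and produce the contradiction. The square and equilateral subcases should be handled in parallel, with the equilateral torus always ruled out because its modulus is incompatible with the period conditions that survive after the symmetry reduction.
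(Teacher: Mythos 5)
Your overall skeleton matches the paper's: force the conformal type to be the square or equilateral torus, split by the pole order of $\phi,\psi$ at $P$ as in Proposition~\ref{prop-divisor}, and use invariance of distinguished points under the symmetry group. But there are two genuine gaps. First, you treat it as automatic that a symmetry of the surface must preserve the unordered pair $\{C_1,C_2\}$ (``canonically determined as zeros of $\phi\psi\,\mathrm{d}h$ and of $\mathrm{d}h$''). This is not automatic: an ambient Lorentz symmetry acts on $(\phi,\psi,\mathrm{d}h)$ by the M\"obius action \eqref{eq-congruent} and could a priori move all the divisors. The paper's proof needs the intermediate step (Lemma~\ref{lem-unique}) that any symmetry fixes the end $P$, hence preserves the tangent/normal splitting there, hence lies in $O(2)\times O(1,1)$; this restricts its action on $\phi\mathrm{d}h,\psi\mathrm{d}h,\mathrm{d}h,\phi\psi\mathrm{d}h$ to scalar multiples up to the swap of $\mathrm{d}h$ with $\phi\psi\mathrm{d}h$ (the swap of $\phi\mathrm{d}h$ with $\psi\mathrm{d}h$ being excluded by their different pole orders at $P$). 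Only then is $\{C_1,C_2\}$, as the zero set of $\phi\mathrm{d}h$, a $\mathbbm{G}$-invariant set. You should supply this argument.

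Second, your treatment of the equilateral torus in Case~1 is wrong. There the relevant rotation is the $Z_3$ subgroup, whose fixed points other than $P$ are the centers of the two equilateral triangles; these are \emph{not} Weierstrass points, so invariance of $\{C_1,C_2\}$ forces $C_1\ne C_2$ to be those two centers, with $y_0\ne 0$ and $x_0\ne 0$. Hence the hypothesis $x_0=0$ of Proposition~\ref{prop-unique} fails and that proposition cannot be invoked to rule this case out. The paper instead looks at the zero set $\{V_1,V_2,N_1,N_2\}$ of $\psi\mathrm{d}h$: since every non-trivial $Z_3$-orbit has three points, invariance forces $V_1=V_2=C_2$ and $N_1=N_2=C_1$, which makes $\phi$ and $\psi$ vanish simultaneously at $C_1$ and contradicts regularity $\phi\ne\bar\psi$. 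Similarly, in Case~2 your proposed ``analogous period computation'' on the equilateral torus is not what is needed; the paper shows the symmetry forces $\{C_1,C_2\}$ and $\{V_1,V_2\}$ both onto the triangle centers, so $\phi\psi$ is constant and Theorem~\ref{thm-deform} reduces to a (nonexistent, by Lopez) genus-one example in $\mathbb{R}^3$ on the equilateral torus. Finally, in Case~2 on the square torus the contradiction is not a soft one of the form ``$b,e\ne0$ fails or $\phi=\bar\psi$'': the period system pins down $\overline{a'}/a=\bar c/b=2\rho^2/3$ and then yields the numerical identity $\rho^4=3/4$, which contradicts the known value $\rho\approx 0.8279$; you would need to carry this computation out rather than assert it.
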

Any symmetry $\mathbbm{g}$ induces a conformal automorphism of the underlying
torus. Under the assumption $|\mathbbm{G}|>4$, there are also more than
$4$ conformal automorphisms for this $T^2$.
They have a common fixed point, the end $P$.
Such a torus must be conformally equivalent to
either a square torus, or a equilateral torus.
(This assertion is a folklore; one may consult \cite{Eujalance}.)
The basic information for these two types are as below. (Denote $\epsilon\triangleq\mathrm{e}^{\pi\mathrm{i}/3}$ with $\epsilon^3=-1$.)
\begin{figure}[htbp]
\centering
\subfigure[$\tau=\mathrm{i}$]{
\label{square}
\includegraphics[width=0.4\textwidth]{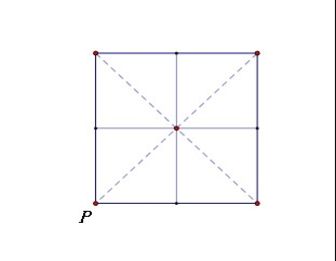}}
\subfigure[$\tau=\mathrm{e}^{\pi\mathrm{i}/3}$]{
\label{rhombic}
\includegraphics[width=0.5\textwidth]{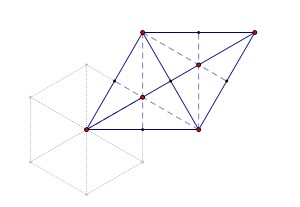}}
\caption{Fundamental domain of symmetric torus}
\label{fundamental}
\end{figure}
\begin{itemize}
\item The conformal automorphism group for the square torus
(or the equilateral torus), including anti-holomorphic transformations, is the dihedral group $D_4$ (or $D_6$), with $8$ (or $12$) elements. Especially it is a finite group as well as $G$.
\item The fundamental domain $\Omega$ is a square (or a
rhombic with one angle equal to $\pi/3$);
\item This torus is equivalent to
$\mathbb{C}/\Lambda(1,\tau)$ with period quotient
$\tau=\mathrm{i}$ (or $\tau=\epsilon$);
\item They can be represented by the elliptic curve
$y^2=x(x-1)(x-\lambda)$ with modular invariant $\lambda=-1$ (or $\lambda=\epsilon$).
\end{itemize}

To examine the symmetry of a Chen-Gackstatter type surface $M$
in $\mathbb{R}^4_1$, first note the symmetry group $\mathbbm{G}$ is
finite; it contains no element of translation in $\mathbb{R}^4_1$.
Then recall that in both Case~1 and 2 in
Section~\ref{sec-divisor} we have chosen and fixed some
coordinate system of $\mathbb{R}^4_1$ so that
\[
\phi(P)=0,\psi(P)=\infty.
\]
That means the limit of the normal plane at $P$ is $X_1=X_2=0$,
spanned by $(0,0,1,1),(0,0,-1,1)$.
So any symmetry $\mathbbm{g}$ of $M$ must preserve the spacelike 2-plane
$X_3=X_4=0$ and the Lorentz 2-plane $X_1=X_2=0$ because
this corresponds to the tangent and normal space decomposition at the end $P$.
We conclude that $\mathbbm{g}\in O(2)\times O(1,1)$.

Consider the action of $\mathbbm{g}$ on the Weierstrass data.
By the Weierstrass representation formula
\[
X_z\mathrm{d}z=(\Theta_1,\Theta_2,\Theta_3,\Theta_4)
=\Big(\phi+\psi, -i(\phi-\psi),
1-\phi\psi,1+\phi\psi\Big)\mathrm{d}h,
\]
we have
\[
\phi\mathrm{d}h=\frac{\Theta_1+\mathrm{i}\Theta_2}{2}~,~
\psi\mathrm{d}h=\frac{\Theta_1-\mathrm{i}\Theta_2}{2}~,~
\mathrm{d}h=\frac{\Theta_3+\Theta_4}{2}~,~
\phi\psi\mathrm{d}h=\frac{\Theta_3-\Theta_4}{2}~.
\]
When $\mathbbm{g}$ induces a holomorphic automorphism of $M$,
the pull back of these meromorphic 1-forms via
$\mathbbm{g}\in O(2)\times O(1,1)$ are
\begin{equation}\label{71}
\left\{\begin{aligned}
\mathbbm{g}^*(\phi\mathrm{d}h)&=\mathrm{e}^{\mathrm{i}\theta}\phi\mathrm{d}h,\\
\mathbbm{g}^*(\psi\mathrm{d}h)&=\mathrm{e}^{-\mathrm{i}\theta}\psi\mathrm{d}h,
\end{aligned}\right.
~~~~\text{or}~~
\left\{\begin{aligned}
\mathbbm{g}^*(\phi\mathrm{d}h)&=\mathrm{e}^{\mathrm{i}\theta}\psi\mathrm{d}h,\\
\mathbbm{g}^*(\psi\mathrm{d}h)&=\mathrm{e}^{-\mathrm{i}\theta}\phi\mathrm{d}h;
\end{aligned}\right.
\end{equation}
and
\begin{equation}\label{72}
\left\{\begin{aligned}
\mathbbm{g}^*(\mathrm{d}h)~~~~&=\mu\mathrm{d}h,\\
\mathbbm{g}^*(\phi\psi\mathrm{d}h)&=\frac{1}{\mu}\phi\psi\mathrm{d}h,
\end{aligned}\right.
~~~~\text{or}~~
\left\{\begin{aligned}
\mathbbm{g}^*(\mathrm{d}h)~~~~&=\mu\phi\psi\mathrm{d}h,\\
\mathbbm{g}^*(\phi\psi\mathrm{d}h)&=\frac{1}{\mu}\mathrm{d}h.
\end{aligned}\right.
\end{equation}
If $\mathbbm{g}$ reverses the orientation of $M$ (i.e. anti-holomorphic
automorphism), the pull back of these meromorphic 1-forms
are \eqref{71}\eqref{72} composed with a complex conjugation.
(Since $\mathbbm{G}$ is a finite group, we know $\mu=\pm 1$, and $\theta/\pi$ is some rational number.
Yet this fact will not be used later.)

We point out that in either Case~1 or Case~2, according to
the information derived in Section~\ref{sec-divisor},
$\phi\mathrm{d}h$ and $\psi\mathrm{d}h$ have poles of
different orders at the fixed point $P$, whereas
$\mathrm{d}h$ and $\phi\psi\mathrm{d}h$ have poles of equal orders.
Since the pull back via $\mathbbm{g}$ will not change the order of pole
at $P$, in \eqref{71} we actually have
$\mathbbm{g}^*(\phi\mathrm{d}h)=\mathrm{e}^{\mathrm{i}\theta}
\phi\mathrm{d}h$ (or post-composed with a complex conjugation).
For $\mathbbm{g}^*(\psi\mathrm{d}h)$ the conclusion is similar.
Especially, we have the following important conclusion.

\begin{lemma}\label{lem-unique}
Assumptions as in Theorem~\ref{thm-unique}.
Then the divisors $(\phi\mathrm{d}h)$ and $(\psi\mathrm{d}h)$
are preserved by any symmetry $\mathbbm{g}$ in both Case~1 and 2,
while $(\mathrm{d}h),(\phi\psi\mathrm{d}h)$
are preserved or interchanged between each other.
\end{lemma}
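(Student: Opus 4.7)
The plan is to deduce the lemma from the two transformation alternatives in \eqref{71}--\eqref{72}, ruling out the unwanted ones by a local pole-order argument at the fixed end $P$.

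First I would record, from the divisor tables in Section~\ref{sec-divisor}, the order of pole at $P$ of each of the four $1$-forms $\phi\mathrm{d}h$, $\psi\mathrm{d}h$, $\mathrm{d}h$, $\phi\psi\mathrm{d}h$. In Case~1 these are $2,4,3,3$ by \eqref{12}--\eqref{14} and \eqref{11}; in Case~2 they are $0,4,2,2$ by \eqref{22}--\eqref{24}. The key qualitative observation, already highlighted in the paragraph just preceding the lemma, is that in both cases the pair $\{\phi\mathrm{d}h,\psi\mathrm{d}h\}$ consists of two $1$-forms with \emph{distinct} pole orders at $P$, while the pair $\{\mathrm{d}h,\phi\psi\mathrm{d}h\}$ consists of two $1$-forms with \emph{equal} pole orders at $P$.

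Next I would exploit the fact that $\mathbbm{g}$ induces a (possibly anti-holomorphic) conformal automorphism of the torus that fixes $P$, hence the pull-back $\mathbbm{g}^*$ preserves the order of pole at $P$ of any meromorphic $1$-form. Testing this against \eqref{71}: the second alternative would force $\phi\mathrm{d}h$ and $\psi\mathrm{d}h$ to have the same pole order at $P$, which contradicts the previous step. Thus only the first alternative in \eqref{71} can occur, namely $\mathbbm{g}^*(\phi\mathrm{d}h)=\mathrm{e}^{\mathrm{i}\theta}\phi\mathrm{d}h$ (composed with complex conjugation if $\mathbbm{g}$ is anti-holomorphic), and similarly for $\psi\mathrm{d}h$. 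Since multiplying a meromorphic $1$-form by a nonzero constant (or conjugating its coefficients) does not change its divisor on $M$, the identity $\mathrm{div}(\mathbbm{g}^*\omega)=\mathbbm{g}^{-1}(\mathrm{div}(\omega))$ forces $\mathbbm{g}^{-1}(\mathrm{div}(\phi\mathrm{d}h))=\mathrm{div}(\phi\mathrm{d}h)$ and likewise for $(\psi\mathrm{d}h)$.

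Finally, for $\mathrm{d}h$ and $\phi\psi\mathrm{d}h$ the pole orders at $P$ coincide, so the local obstruction disappears and both alternatives in \eqref{72} are admissible. The first alternative gives that each of $(\mathrm{d}h)$ and $(\phi\psi\mathrm{d}h)$ is preserved by $\mathbbm{g}$; the second gives that they are interchanged. I do not foresee any serious obstacle here: the only subtle point is to check in the anti-holomorphic case that composition with complex conjugation still yields a proportionality relation to the same $1$-form on $T^2$ (and hence the same divisor), which follows because conjugation acts on scalar coefficients only and fixes the underlying set of points. The lemma is therefore a bookkeeping corollary of the Section~\ref{sec-divisor} divisor data combined with \eqref{71}--\eqref{72}.
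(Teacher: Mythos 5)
Your proposal is correct and follows essentially the same argument as the paper: the pole orders at the fixed end $P$ ($2,4$ versus $3,3$ in Case~1 and $0,4$ versus $2,2$ in Case~2) rule out the swapping alternative in \eqref{71} while leaving both alternatives in \eqref{72} open, exactly as in the paragraph preceding the lemma. No gaps.
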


Now we prove Theorem~\ref{thm-unique} through a case-by-case analysis. \\

\noindent
\textbf{Case~1.} Recall that in Subsection~\ref{sub-case1}
we obtained the divisors:
\begin{align}
&(\phi)=P+C_1-V_1-V_2,~ (\psi)=-P-C_2+N_1+N_2; \label{73}\\
&(\phi \mathrm{d}h)=-2P+C_1+C_2,~
(\psi \mathrm{d}h)=-4P+V_1+V_2+N_1+N_2; \label{74}\\
&(\mathrm{d}h)=-3P+V_1+V_2+C_2,~
(\phi \psi \mathrm{d}h)=-3P+C_1+N_1+N_2. \label{75}
\end{align}
By Lemma~\ref{lem-unique}, we know $\{C_1,C_2\}$, the zero set
of $\phi\mathrm{d}h$, must be preserved by the symmetry group $\mathbbm{G}$. Now we discuss the two conformal types separately.\\

\textbf{(1.1) Square torus.} $\mathbbm{G}$ is a subgroup
of $D_4$ with more than $4$ elements, so $\mathbbm{G}=D_4$, which
contains all the symmetries of the square (as the fundamental domain $\Omega$).
Since $\{C_1,C_2\}$ is an invariant subset under $\mathbbm{G}=D_4$,
the only possibility is $C_1=C_2$ located at the center of $\Omega$.
This time $\phi\mathrm{d}h=(x-x_0)\mathrm{d}z$ has
a double zero; so we may take $x_0=0$ in \eqref{eq-x0}.
By Proposition~\ref{prop-unique}, such surfaces are Lorentz
deformations of the classical Chen-Gackstatter surface.\\

\textbf{(1.2) Equilateral torus.}  $\mathbbm{G}$ might be $D_6$
or one of its subgroups $D_3, Z_6$. In any case $\mathbbm{G}$ contains the
subgroup $Z_3$. Take the fundamental domain $\Omega$ as
the rhombic below, which is the union of two equilateral triangles.
Its subset $\{C_1,C_2\}$, invariant under the action of
this $Z_3$,
obviously consists of the centers of these two triangles.
Except these three fixed points $C_1,C_2,P$, every orbit of
$Z_3$ on $T^2$ contains $3$ distinct points.

On the other hand, the zero set of $\psi\mathrm{d}h$
is $\{V_1,V_2,N_1,N_2\}$. Among them, $\{V_1,V_2\}$ and
$\{N_1,N_2\}$ are invariant or interchanged under the action of
$Z_3$, because they are zeros of $\mathrm{d}h$
and $\phi\psi\mathrm{d}h$, respectively.
This forces $V_1=V_2=C_2, N_1=N_2=C_1$.
(Note that $V_1=V_2$ can not be $C_1$ since they are respectively
the pole and a zero of $\phi$. Similarly, $N_1=N_2\ne C_2$.)
But this time $\phi(C_1)=\psi(C_1)=0,\phi(C_2)=\psi(C_2)=\infty$. This contradicts with the regularity condition $\phi\ne\bar\psi$. (A little bit more analysis will show that the period conditions are also violated.)
So this subcase is ruled out.\\

\noindent
\textbf{Case~2.} We discuss the two conformal types separately.\\

\textbf{(2.1) Square torus.} We will prove a stronger result:
In Case~2 there is no example defined on the square torus.
Otherwise, suppose there is such a surface with $\lambda=-1$.
Invoking the result in Subsection~\ref{sub-case2} and
put $e=1,-ax_0=a',\lambda=-1$ in \eqref{22}$\sim$\eqref{27}, we get the Weierstrass data
\[
\phi=1/(ax+a'),~~ \psi=bx+c,~~\mathrm{d}h=(ax+a')\mathrm{d}z;
\]
and the period conditions
\begin{align}
&\mathrm{Re}[a\Phi_i+a'\omega_i]=0, \label{eq-period71}\\
&\mathrm{Re}[b\Phi_i+c\omega_i]=0, \label{eq-period72}\\
&-\overline{\omega_i}
=(ac+a'b)\Phi_i +
\Big(a'c+\frac{ab}{3}\Big)\omega_i. \label{eq-period73}
\end{align}
It is known (see \eqref{-1}) that when $\lambda=-1$,
$\Phi_2=-\mathrm{i}\Phi_1\in\mathrm{i}\mathbb{R}$,
$\omega_2=\mathrm{i}\omega_1\in\mathrm{i}\mathbb{R}$.
Insert this into \eqref{eq-period71}\eqref{eq-period72},
we obtain
\[
\overline{a'}/a=\bar{c}/b=-\Phi_1/\omega_1=2\rho^2/3.
\]
Put this back to \eqref{eq-period73} and set $i=1,2$.
It follows
\[
1=(\bar{a}b+a\bar{b})\frac{4}{9}\rho^4;~~~
\bar{a}\bar{b}\cdot\frac{4}{9}\rho^4+\frac{1}{3}ab=0.
\]
The second equality implies $\rho^4=3/4=0.75$.
Yet this contradicts with what we knew before in \eqref{eq-rho} that
$\rho\approx 0.8279$.\\

\textbf{(2.2) Equilateral torus.} As in previous discussion in (1.2), $\mathbb{G}$ contains the subgroup $Z_3$.
In Subsection~\ref{sub-case2}
we obtained the divisors:
\begin{gather}
(\phi)=2P-V_1-V_2,~ (\psi)=-2P+C_1+C_2,~ (\mathrm{d}h)=-2P+V_1+V_2. \label{77}\\
(\phi\mathrm{d}h)=0,(\psi\mathrm{d}h)=
-4P\!+\!C_1\!+\!C_2\!+\!V_1\!+\!V_2,
(\phi \psi \mathrm{d}h)=-2P\!+\!C_1\!+\!C_2. \label{78}
\end{gather}
By Lemma~\ref{lem-unique}, we know $\{C_1,C_2\}$, the zero set
of $\phi\psi\mathrm{d}h$, and $\{V_1,V_2\}$, the zero set
of $\mathrm{d}h$, must keep invariant under $Z_3\subset \mathbb{G}$,
or interchange with each other. Since the sum of either pair
on the fundamental domain $\Omega\subset\mathbb{C}$
must be in the lattice, the only possibility is that both
two pairs coincide with the centers of the two equilateral
triangles (the fixed points of $Z_3$ other than $P$).
This implies that $(\phi\psi)=0$, hence $\phi\psi$ is a constant.
By Theorem~\ref{thm-deform}, this means this surface comes
from the Lorentz deformation of a Chen-Gackstatter type
surface $M'$ in $\mathbb{R}^3$ which is conformal to a
equilateral torus punctured at one point. But according to the
uniqueness result of Lopez \cite{Lopez-1992}, there does not
exist such a surface $M'$ in $\mathbb{R}^3$. This completes the proof to the whole uniqueness theorem stated at the beginning.
(The final argument can be replaced by using Kusner's observation that such a surface $M'\subset \mathbb{R}^3$ must have a triple point, which violates the monotonicity formula in $\mathbb{R}^3$. See the next section.)

\section{Further discussion on existence and embedding problems}

In conclusion to our discussions in the preceding sections,
it is difficult to give a complete answer to Question~1 and 2 posed in the introduction, i.e., the problem of existence, uniqueness, and embeddedness. In particular, it is not known whether there exist
any example in $\mathbb{R}^4_1$ with Weierstrass data given in Case~2.
Note that in $\mathbb{R}^3$ such examples do not exist.
But we can prove the following non-embedding result.
\begin{theorem}\label{thm-triple}
Suppose there exists a complete regular algebraic stationary surface
$X: T^2-\{P\} \to \mathbb{R}^4_1$ with total curvature $-\int_M K \mathrm{d}M=8\pi$, whose Gauss map $\{\phi, \psi\}$ have order 2 at the end $P$(Case~2). Then this surface is not embedded.
\end{theorem}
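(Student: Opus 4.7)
The plan is to construct an involution $\iota$ of the torus under which all the Weierstrass data are equivariant, and then to use the three non-end fixed points of $\iota$ to exhibit a point in $\mathbb{R}^4_1$ with three distinct preimages.

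First I identify $\iota$. On $T^2\cong\mathbb{C}/\Lambda$ the only non-translation holomorphic involution fixing $P$ is $\iota(z)=2P-z$; it has exactly four fixed points, namely $P$ together with the three non-trivial half-period translates $q_1,q_2,q_3$ of $P$. Since $\phi$ has degree $2$ and is ramified at $P$ (a double zero, by \eqref{21}), the deck involution of the double cover $\phi:T^2\to\mathbb{CP}^1$ must fix $P$ and must therefore coincide with $\iota$; hence $\iota^*\phi=\phi$. Applying the same reasoning to the double pole of $\psi$ at $P$ gives $\iota^*\psi=\psi$. For the height differential, the divisor $(\mathrm{d}h)=-2P+V_1+V_2$ combined with the observation that $\iota$ swaps $V_1\leftrightarrow V_2$ (these simple poles of $\phi$ are not fixed points of $\iota$) forces $(\iota^*\mathrm{d}h)=(\mathrm{d}h)$, so $\iota^*\mathrm{d}h=\lambda\,\mathrm{d}h$ for a constant $\lambda$ with $\lambda^2=1$. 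A leading-order Laurent calculation in a local coordinate $t=z-P$ at $P$, in which $\iota$ acts as $t\mapsto -t$, pins down $\lambda=-1$.

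Combining these facts, every component of the Weierstrass vector
\[
\vec\Theta=\big(\phi+\psi,\,-\mathrm{i}(\phi-\psi),\,1-\phi\psi,\,1+\phi\psi\big)\mathrm{d}h
\]
satisfies $\iota^*\vec\Theta=-\vec\Theta$. Integrating along paths yields $X\circ\iota+X\equiv C$ for a constant vector $C\in\mathbb{R}^4_1$, so at every fixed point $q$ of $\iota$ one has $2X(q)=C$, i.e.\ $X(q_1)=X(q_2)=X(q_3)=C/2$. The three distinct points $q_1,q_2,q_3\in T^2\setminus\{P\}$ therefore map to a single point of $\mathbb{R}^4_1$, producing a triple point and showing that $X$ is not embedded.

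The main subtlety I anticipate is the sign $\lambda=-1$ in $\iota^*\mathrm{d}h=\lambda\,\mathrm{d}h$ and its consistency at all orders of the Laurent expansion at $P$: matching coefficients term by term automatically forces $\mathrm{Res}_P(\mathrm{d}h)$ and every odd-order coefficient of $\mathrm{d}h$ at $P$ to vanish, a compatibility supplied for free by Riemann-Roch (the one-dimensionality of the space of $1$-forms with the prescribed divisor) but worth an explicit check. A secondary point to record is the uniqueness of the non-translation involution of $T^2$ fixing $P$, which is exactly what forces the deck involutions of $\phi$ and $\psi$ to agree with each other and with $\iota$.
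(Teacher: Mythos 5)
Your proposal is correct and is essentially the paper's own argument: your $\iota(z)=2P-z$ is exactly the involution $[x,y,1]\mapsto[x,-y,1]$ of the curve $y^2=x(x-1)(x-\lambda)$ used in the paper, whose three non-end fixed points (the finite branch points) are sent to a single triple point because $\iota^*(X_z\mathrm{d}z)=-X_z\mathrm{d}z$. The only difference is presentational — the paper reads off $\phi\circ I=\phi$, $\psi\circ I=\psi$, $I^*\mathrm{d}h=-\mathrm{d}h$ from the explicit formulas \eqref{22}--\eqref{24}, whereas you derive the same equivariance intrinsically from deck transformations and divisors — and your reasoning checks out.
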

\begin{proof}
Follow the discussion in Subsection~\ref{sub-case2}, we consider
an involution of $T^2:\{[x,y,1]|y^2=x(x-1)(x-\lambda)\}\cup\{[0,1,0]\}$ as below:
\begin{align*}
I:  \qquad T^2 &\longrightarrow \quad T^2\\
    [x,y,1]&\longmapsto  [x,-y,1]
\end{align*}
From \eqref{22}\eqref{24}, $\phi=1/(ax-ax_0),\psi=bx+c,\mathrm{d}h=a(x-x_0)\mathrm{d}x/y,$
we get
\[
\phi\circ I=\phi,~\psi\circ I=\psi,~I^*(\mathrm{d}h)=-\mathrm{d}h.
\]
So $I^*(X_z\mathrm{d}z)=-X_z\mathrm{d}z$. Taking a translation in $\mathbb{R}_1^4$ if necessary, $I$ induces a transformation on $X(M)$, which is the restriction of the central symmetry $\widetilde{I}:X\to -X$ with a unique fixed point $(0,0,0,0)$ in $\mathbb{R}_1^4$.
On the other hand, $(x,y)=(0,0),(1,0),(\lambda,0)$ are three fixed points of the transformation $I$ on this $T^2-\{P\}$. So $X(P_i)$ are three fixed points under the transformation
$\widetilde{I}$. We conclude $X(P_i)=(0,0,0,0)$. This is a triple
point. So $X(M)$ can not be embedded.
\end{proof}

In $\mathbb{R}^3$, the argument
above implies a contradiction with the monotonicity formula,
which shows the non-existence of such examples in Case~2.

In view of this, we consider the problem of generalizing
the monotonicity formula to $\mathbb{R}_{1}^{4}$.
Since the metric of $\mathbb{R}_{1}^{4}$ and its restriction
on the normal bundle are indefinite,
the direct generalization seems not true.
On the other hand, so far we did not find counter-examples to the multiplicity inequality mentioned in the introduction.
Thus we make the following\\

\textbf{Conjecture (the multiplicity inequality)}: For a complete, immersed, algebraic stationary surface ${\bf x}:M\to \mathbb{R}_{1}^{4}$ with $r$ regular ends, suppose it has multiplicity $m$ at one point ${\bf x}(q)$, and its multiplicity at the end $p_j$ ($1\le j\le r$ is $d_j$. Then $m<\sum_{j=1}^r d_j$.\\

For complete stationary surfaces with total curvature
$-\int K\mathrm{d}M=8\pi$ and embedded in $\mathbb{R}^4_1$,
in \cite{Liu-Ma-Wang-Wang-2012} we have shown that
the generalized $3$-noid could be such an example.
So far it is still an open question whether there exist genus one examples. By Theorem~\ref{thm-triple}, any of such examples
with a unique end must be of Case~1.
Yet among the 4-parameter family of deformations known to us,
except the 2-parameter family of Lorentz deformations
(Proposition~\ref{prop-deform}),
for any of these Chen-Gackstatter type surfaces we do not know whether it is embedded.

In contrast, in $\mathbb{R}^4$ the same problem is easy to answer:
there are many embedded minimal tori with one end and total
Gauss curvature $-\int K\mathrm{d}M=8\pi$. The simplest construction is the elliptic curves with one puncture
\[
T^2_{\lambda}-\{P\}=\{(x,y)|y^2=x(x-1)(x-\lambda)\}\subset \mathbb{C}^2=\mathbb{R}^4.
\]
It is easy to see that it is embedded with a unique end of multiplicity $3$. By the Jorge-Meeks formula it has total curvature $8\pi$.

For possible examples of genus one with two catenoid ends,
we will consider it in the future. Note that if there exists
such a stationary surface in $\mathbb{R}^4_1$, when it has self-intersection, we get a
counter-example to the conjecture above; otherwise we will
have a desired embedded stationary surface. Either conclusion
is interesting.

\vspace{5mm} \noindent Zhenxiao Xie, {\small\it School of
Mathematical Sciences and Beijing International Center for Mathematical Research, Peking University, 100871 Beijing, People's
Republic of China. e-mail: {\sf xiezhenxiao@126.com}}

\vspace{5mm} \noindent Xiang Ma, {\small\it LMAM, School of
Mathematical Sciences, Peking University, 100871 Beijing, People's
Republic of China. e-mail: {\sf maxiang@math.pku.edu.cn}}

\end{document}